\documentclass[oneside,a4paper,10pt]{amsart}
\parskip4pt plus2pt minus2pt

\addtolength{\hoffset}{-1cm}
\addtolength{\textwidth}{2cm}

\newcommand{\thetitle}{Random spectrahedra}

\newcommand{\theauthor}{Paul Breiding, Khazhgali Kozhasov and Antonio Lerario}
\newcommand{\citecolor}{black}
\newcommand{\linkcolor}{black}

\newcommand{\txt}[1]{\text{\normalfont{#1}}}
\newcommand{\R}{\mathbb{R}}

\usepackage[utf8]{inputenc}
\usepackage{listings}
\usepackage[T1]{fontenc}
\usepackage[english]{babel}
\usepackage[format=plain, font=footnotesize]{caption}
\usepackage[babel]{csquotes}
\usepackage{tikz}
\usepackage{amsthm}
\usepackage{amsmath,amssymb}
\usepackage[title,titletoc]{appendix}
\usepackage[colorlinks=true, citecolor=\citecolor, linkcolor=\linkcolor]{hyperref}
\usepackage[english]{cleveref}
\usepackage{color,calc,comment}
\usepackage{enumitem}
\usepackage{bbm}
\usepackage{euscript}
\usepackage{mathrsfs}
\usepackage{graphicx}
\usepackage{tkz-euclide, tikz}
\usetikzlibrary{backgrounds,patterns,matrix,calc,arrows,positioning,fit,shapes.geometric,decorations.pathmorphing}
\pgfdeclarelayer{bg}
\pgfdeclarelayer{bg2}    
\pgfsetlayers{bg2,bg,main}
\usepackage{wrapfig}
\setenumerate{labelsep=*, leftmargin=1.5pc}


\newcommand{\HR}{\mathbb{R}}

\newcommand{\sS}{\mathscr{S}}

\newcommand{\cB}{\mathcal{B}}

\newcommand{\cO}{\mathcal{O}}
\newcommand{\cP}{\mathcal{P}}
\newcommand{\cQ}{\mathcal{Q}}
\newcommand{\cR}{\mathcal{R}}
\newcommand{\cS}{\mathcal{S}}

\newcommand{\cZ}{\mathcal{Z}}

\newcommand{\set}[1]{\left\{#1\right\}}
\newcommand{\cset}[2]{\left\{#1\mid #2\right\}}

\newcommand{\tlambda}{\tilde{\lambda}}
\newcommand{\tlambdamin}{\tlambda_{\min}}
\newcommand{\lambdamin}{\lambda_{\min}}

\renewcommand{\d}{\mathrm{ d}}
\renewcommand{\dot}[1]{\overset{\raisebox{-0.25ex}{\scalebox{0.4}{$\bullet$}}}{#1}}

\DeclareMathOperator*{\Prob}{\mathrm{Prob}}

\DeclareMathOperator*{\mean}{\mathbb{E}}

\newcommand{\be}{\begin{equation}}
\newcommand{\ee}{\end{equation}}

\numberwithin{equation}{section}
\numberwithin{figure}{section}
\theoremstyle{plain}
\newcounter{numbering} \numberwithin{numbering}{section}
\newtheorem{thm}[numbering]{Theorem}
\newtheorem{lemma}[numbering]{Lemma}
\newtheorem{prop}[numbering]{Proposition}
\newtheorem{cor}[numbering]{Corollary}
\theoremstyle{definition}

\theoremstyle{remark}

\newtheorem*{rem}{Remark}

\crefname{equation}{}{}
\crefname{equation}{}{}
\crefname{figure}{Figure}{Figures}
\crefname{section}{Section}{Sections}
\crefname{lemma}{Lemma}{Lemmata}
\crefname{prop}{Proposition}{Propositions}
\crefname{thm}{Theorem}{Theorems}
\crefname{cor}{Corollary}{Corollaries}
\crefname{dfn}{Definition}{Definitions}
\crefname{notation}{Notations}{Notations}
\crefname{rem}{Remark}{Remarks}
\crefname{claim}{Claim}{claims}
\pagestyle{plain}
\parindent0.5cm

\begin{document}
\title{\thetitle}
\author{\theauthor}
\thanks{\hspace{-0.5cm}
PB: Max-Planck-Institute for Mathematics in the Sciences (Leipzig), breiding@mis.mpg.de,\\
KK: Max-Planck-Institute for Mathematics in the Sciences (Leipzig), kozhasov@mis.mpg.de,\\
AL: SISSA (Trieste), lerario@sissa.it}
\begin{abstract}
Spectrahedra are affine-linear sections of the cone $\cP_n$ of positive semidefinite symmetric $n\times n$-matrices. We consider random spectrahedra that are obtained by intersecting~$\cP_n$  with the affine-linear space $\mathbbm{1} + V$, where $\mathbbm{1}$ is the identity matrix and $V$ is an $\ell$-dimensional linear space that is chosen from the unique orthogonally invariant probability measure on the Grassmanian of $\ell$-planes in the space of $n\times n$ real symmetric matrices (endowed with the Frobenius inner product). Motivated by applications, for $\ell=3$ we relate the average number $\mean \sigma_n$ of singular points on the boundary of a three-dimensional spectrahedron to the volume of the set of symmetric matrices whose two smallest eigenvalues coincide. In the case of quartic spectrahedra ($n=4$) we show that $\mean \sigma_4 = 6-\frac{4}{\sqrt{3}}$. Moreover, we prove that the average number $\mean \rho_n$ of singular points on the real variety of singular matrices in $\mathbbm{1} + V$ is $n(n-1)$. This quantity is related to the volume of the variety of real symmetric matrices with repeated eigenvalues. Furthermore, we compute the asymptotics of the volume and the volume of the boundary of a random spectrahedron.
\end{abstract}
\maketitle
\section{Introduction}
The intersection of an {affine-linear} subspace $V\subset \textrm{Sym}(n,\mathbb{R})$ of the space of real symmetric matrices with the cone of positive semidefinite symmetric matrices $\mathcal{P}_n$ is called a \emph{spectrahedron}.
The cone over $V\cap \mathcal{P}_n$ is called a \emph{spectrahedral cone}. That is, a spectrahedral cone is the intersection of $\mathcal{P}_n$ with a {linear} subspace of $\textrm{Sym}(n,\mathbb{R})$.

Optimization of a linear function over a spectrahedron is called \emph{semidefinite programming} \cite{Alizadeh1995, Ramana1995}. This is a useful generalization of \emph{linear programming}, i.e., optimization of a linear function over a polyhedron. Problems like finding the smallest eigenvalue of a symmetric matrix or optimizing a polynomial function on the sphere can be approached via semidefinite programming.

The content of this article is a probabilistic study of spectrahedra and spectrahedral cones. We see our results as a contribution to understanding the geometry of feasible sets, which can contribute to better understanding of algorithms in future works. In particular, \cref{main3} on the expected number of singular points on the boundary of a three-dimensional spectrahedron should be highlighted here. For $n=4$ we show that that for our model of a random spectrahedron the expected number of singular points on the boundary is $6-\tfrac{4}{\sqrt{3}}\approx 3.69$, which implies that the appearance of singular points should be anticipated. This is interest for reseachers working in optimization, because the presence of singularities is relevant for semidefinite programming: the set of all linear functions, that when constrained on a three-dimensional spectrahedron attain their maximum in a singular point of the boundary, is an open set, and hence not neglectable.
For example, consider the following cubic spectrahedron (in coordinates):
$$\sS =\cset{(x,z,y)\in \HR^3}{\left(\begin{smallmatrix} 1 & x & y \\ x & 1 & z \\ y & z & 1\end{smallmatrix}\right)\in \cP_n}.$$ The linear function $\psi_w(x,y,z)=\langle w,(x,y,z)\rangle, w\in \HR^3$, constrained on $\sS$ attains its maximum at a point $(x,y,z)\in \partial \sS$ on the boundary of $\sS$ at which the normal cone to $\partial \sS$ contains~$w$. At a singular point of the boundary the normal cone is three dimensional; cf.  shown in \cref{fig0}

Furthermore, we believe that our results will be helpful for testing algorithms when generating random input data. Below we discuss two probabilistic models for a spectrahedron, one of which is empty with high probability, while the other one is non-empty. Therefore, if one wants to validate an algorithm that computes with {non-empty} spectrahedra -- such as for instance the step of following the central path in interior point methods -- one should take the second model for generating random instances for testing.

Besides mentioned applications spectrahedra also appear in modern real algebraic geometry: in \cite{J.WilliamHelton2006} Helton and Vinnikov gave a beautiful characterization of two-dimensional spectrahedra and in \cite{DegIt} Degtyarev and Itenberg described all generic possibilities for the number of singular points on the boundary of a quartic three-dimensional spectrahedron. The reader can also look at the survey \cite{Vinzant}. We thus also see our work as a contribution to \emph{random real algebraic geometry}.

\begin{figure}[t]
{\begin{center}
\includegraphics[height = 3.5cm]{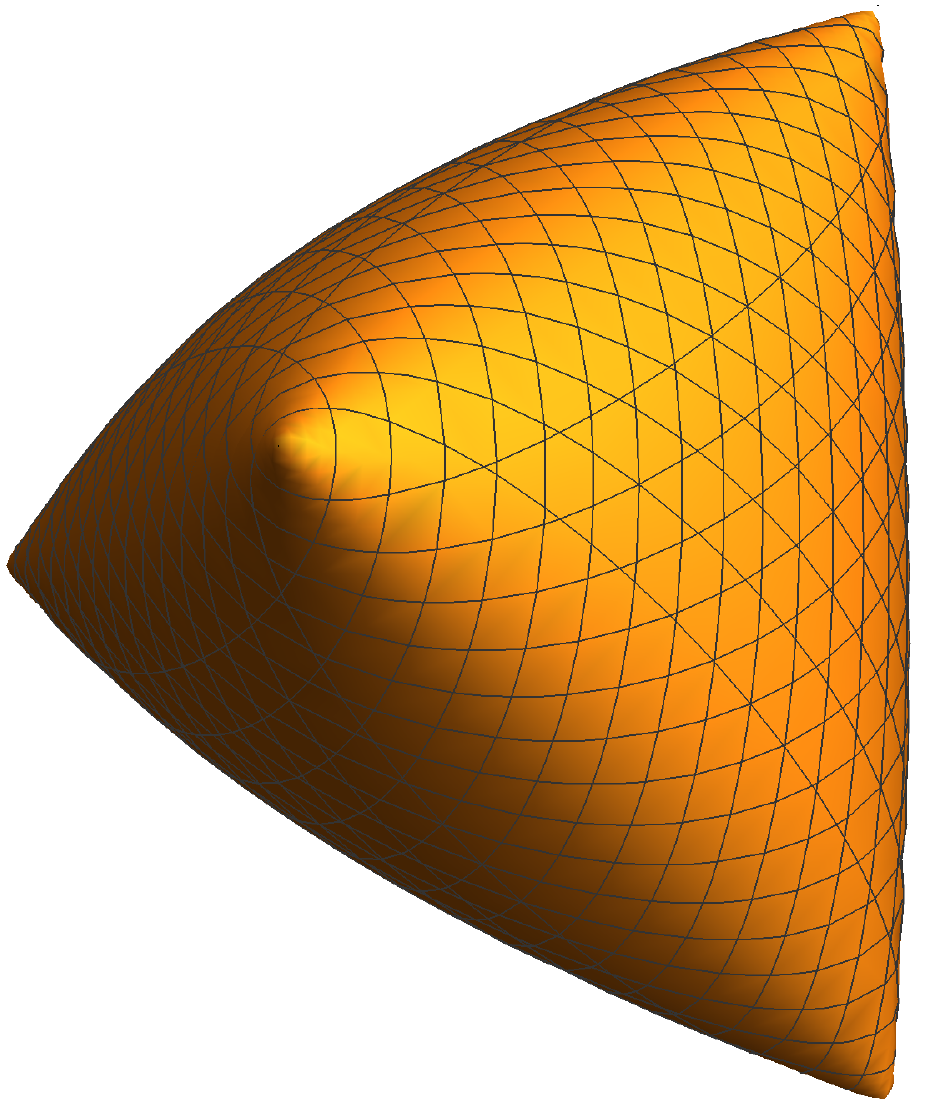} \hspace{2cm}
\includegraphics[height = 3.5cm]{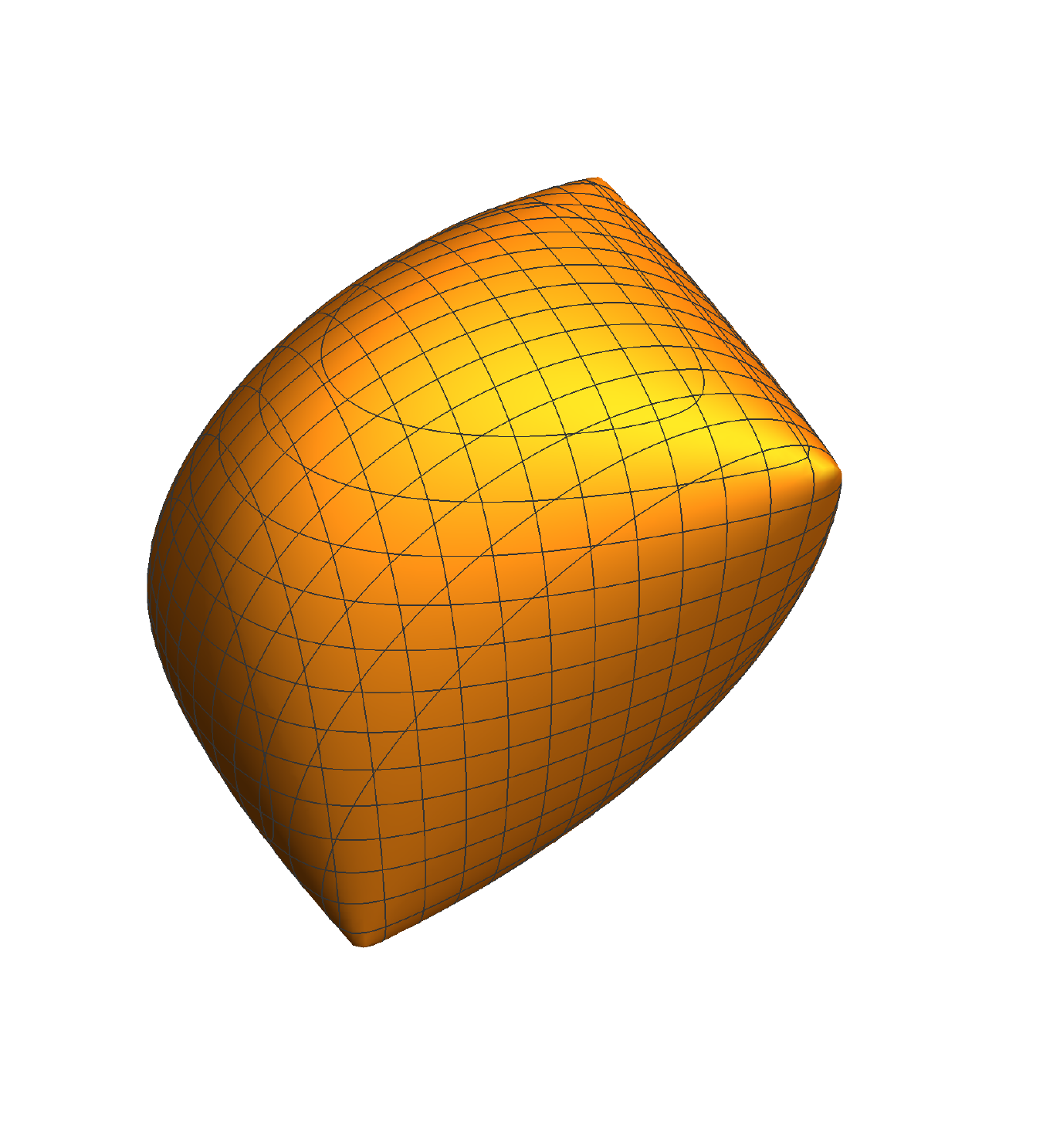}
\end{center}}
\caption{\small \label{fig0}
 On the left is the cubic spectrahedron 
 from the introduction. On the right is a quartic spectrahedron
(in \cite{GrigoriyBlekherman2012} it is called the ``pillow''). The singularities on the boundaries of both spectrahedra are visible. 
}
\end{figure}

%
%
%

\subsection{A possible probabilistic model}
Before discussing the model of this paper, we first describe a first possible natural model for a random spectrahedral cone. This is obtained by taking the intersection of the cone $\mathcal{P}_n$ of positive semidefinite matrices with a linear space drawn from \mbox{\emph{the uniform distribution}} (i.e., the unique orthogonally invariant probability distribution) on \emph{the Grassmannian} $\mathbb{G}_\ell:=\{V \subset \mathrm{Sym}(n, \HR) : \dim V = \ell\}$ of \mbox{$\ell$-dimensional} linear subspaces of $\mathrm{Sym}(n,\HR)$.  
However, this model is of little practical importance as a random spectrahedral cone sampled according to the above distribution is empty with high probability.
Indeed, if, in this model, $\cS_{\ell, n}$ denotes a random spectrahedral cone, then, by \cite[Lemma 4]{gap}, $\mathbb{P}\{ \cS_{\ell,n}\neq \emptyset \}\leq \cO(n^{-c})$, as $n\rightarrow +\infty$ and $\ell$ is fixed, for every $c>0$. In other words, the probability that $\cS_{\ell, n}$ is not empty decays superpolynomially in $n$. Nonempty spectrahedral cones for this model are essentially inaccessible.

\subsection{Our probabilistic model} Motivated by the previous discussion, we propose an alternative model, in which a random spectrahedral cone does not disappear with such an overwhelming probability. To obtain a random spectrahedral cone we intersect $\mathcal{P}_n$ with the linear space $\mathrm{span}\{\mathbbm{1}\} + V$, where  $\mathbbm{1} \in \mathrm{Sym}(n, \HR)$ is the identity matrix and $V$ is chosen from the uniform distribution on $\mathbb{G}_{\ell}$. With this definition a random spectrahedral cone is always nonempty: in fact, the interior of this spectrahedral cone is open and, since it always contains the identity matrix, this open set is also nonempty.

For our probabilisitc study, we consider random spectrahedra in coordinates:
\be\label{spectrahedronQ1} \hat{\sS}_{\ell, n}=\cset{x=(x_1,\ldots,x_\ell)\in \HR^\ell}{\mathbbm{1}+ x_1 Q_1 + \cdots+ x_\ell Q_\ell\in \cP_n},\ee
where the matrices $Q_1, \ldots, Q_\ell$ are independently sampled from \emph{the Gaussian Orthogonal Ensemble} $\textrm{GOE}(n)$ \cite{mehta,tao_matrix}.
We write $Q\sim \textrm{GOE}(n)$ if the joint probability density of the entries of the symmetric matrix $Q\in \textrm{Sym}(n, \R)$ is $\varphi(Q) = 1/C_n \exp(-\Vert Q\Vert_F^2/2),$ where $\Vert Q\Vert_F^2 = \textrm{tr}(Q^2)$ is the square of \emph{the Frobenius norm} and
 $C_n$ is the normalization constant with $\int_{\textrm{Sym}(n, \R)}\varphi(Q)\,\d Q=1$, $\d Q= \prod_{1\leq i<j\leq n} \d Q_{ij}$.  In other words, the entries of $Q$ are centered gaussian random variables, the diagonal entries having variance $1$ and the off-diagonal entries having variance $1/2$. The distribution of the $Q_i$ is orthogonally invariant, and so the linear space $\mathrm{span}\{Q_1,\ldots,Q_\ell\}$ has the same distribution as $V$ above.

In fact, we make two more ``practical'' adjustments to the model \cref{spectrahedronQ1}. First, we rescale the matrices $Q_i$ by the factor $(2n\ell)^{-1/2}$. This serves to balance the order of magnitudes of eigenvalues of the two summands~$\mathbbm{1}$ and~$x_1Q_1+\cdots+x_\ell Q_\ell$. Second, we introduce the following alternative parametrization, which we call a \emph{spherical spectrahedron}:
\be\label{spectrahedronQ} \sS_{\ell, n}=\cset{x=(x_0,\ldots,x_\ell)\in S^\ell}{x_0\,\mathbbm{1}+ \frac{1}{\sqrt{ 2n\ell}} (x_1 Q_1 + \cdots+ x_\ell Q_\ell)\in \cP_n}.\ee
Here, $S^\ell$ denotes the unit sphere in $\HR^\ell$, hence the prefix \emph{spherical}.

In this sense, spherical spectrahedra are just another possible normalization of the spectrahedral cone, in the same way as spectrahedra are the normalization given by fixing one vector in the linear space to be contained in the corresponding affine-linear space. In particular, $\hat{\sS}_{\ell, n}$ is non-empty, if and only if $\sS_{\ell, n}$ is non-empty. Moreover, for three-dimensional random spectrahedra, the number of singular points on the boundaries\footnote{In this paper a ``singular point'' of a spectrahedron is a singular point on the symmetroid surface \eqref{eq:symmetroid} (singular in the sense of algebraic geometry) which belongs to the spectrahedron.} of $\hat{\sS}_{3, n}$ and  $\sS_{3, n}$ almost surely coincide.

In the following, our study focuses on spherical spectrahedra, because the formulas for the volumes are easier to grasp for \cref{spectrahedronQ} than for \cref{spectrahedronQ1}. Furthermore, from now on, for simplicity, we abuse the terminology and use the term ``spectrahedron'' to refer to a spherical spectrahedron.

Our first main result \cref{main1} concerns the expected spherical volume $\vert \sS_{\ell, n}\vert$ of a random spectrahedron $\sS_{\ell, n}$. We show that asymptotically, when both $\ell$ and $n$ tend to $+\infty$, a random spectrahedron on average occupies at least $15\%$ of the sphere $S^\ell$.

Next to the volume of the spectrahedron itself, we are also interested in the volume of its boundary $\partial \sS_{\ell,n}$.
The boundary is contained in the \emph{symmetroid hypersurface}
\begin{equation}\label{eq:symmetroid}\Sigma_{\ell,n}=\{x\in S^{\ell}\mid \det (x_0 \mathbbm{1}+\tfrac{1}{\sqrt{ 2n\ell}} (x_1 Q_1 + \cdots+ x_\ell Q_\ell))=0\}.\end{equation}
The random real algebraic variety $\Sigma_{\ell,n}$ is singular with positive probability when $\ell\geq 3$. Excluding the singular points of $\Sigma_{\ell,n}$ from $\partial \sS_{\ell,n}$, we get a smooth manifold embedded in $S^\ell$, and we define the volume of the boundary of the spectrahedron as the volume of this manifold: $\vert \partial \sS_{\ell,n}\vert := \vert \partial \sS_{\ell,n}\backslash \mathrm{Sing}(\Sigma_{\ell,n})\vert$.

As we noted above, an important feature for applications is the understanding of the structure of singularities on the boundary of the spectrahedron. In the case $\ell = 3$ the singularites are isolated points with probability one (see Corollary \cref{cor:bound}). For the sake of completeness we will study not only the expected number of singular points on $\partial \sS_{3,n}$ but also on the whole symmetroid surface $\Sigma_{3,n}$ (in each case the number of singular points is finite with probability one). By $\sigma_n$ and $\rho_n$ we denote the number of singular points on $\partial \sS_{3,n}$ and on $\Sigma_{3,n}$ respectively.

Summarizing, we will be interested in:
$$\mean | \sS_{\ell,n}|_\mathrm{rel},\quad \mean | \partial \sS_{\ell,n}|_\mathrm{rel},\quad \mean \sigma_n\quad\text{and}\quad  \mean \rho_n,$$
where
$$| \sS_{\ell,n}|_\mathrm{rel} := \frac{| \sS_{\ell,n}|}{\vert S^\ell\vert }\quad\text{and}\quad| \partial \sS_{\ell,n}|_\mathrm{rel} := \frac{|\partial \sS_{\ell,n}|}{\vert S^{\ell-1}\vert }.$$
Our main results on these quantities follow next.
\subsection{Main results}
In the following by $\lambda_{\min}(Q)$ we denote the smallest eigenvalue of a real symmetric matrix $Q$ and we write
	\begin{equation}\label{smallest_EV_scaled}\tlambdamin(Q):=\frac{\lambda_{\min}(Q)}{\sqrt{2n}}
	\end{equation}
for the rescaled smallest eigenvalue.
The following result is proved in \cref{sec:proof1}.
\begin{thm}[Expected volume of the spectrahedron]\label{main1}
Let $F_\ell$ denote the cumulative distribution function of the student's t-distribution with $\ell$ degrees of freedom \cite[Chapter 28]{NormanLJohnson1995} and $\Phi(x)$ denote the cumulative distribution function of the normal distribution \cite[40:14:2]{atlas}. Then\footnote{Here the notation $f(x)=O(g(x))$ means that there exists $C>0$ (called ``the implied constant'') such that $f(x)\leq Cg(x)$ for all $x>0$ large enough.}
\begin{enumerate}
  \item
$\displaystyle \mean |\sS_{\ell,n}|_\mathrm{rel}=\mean_{Q\in \emph{\textrm{GOE}}(n)} \, F_\ell\big(\tlambdamin(Q)\big).$
\item $\displaystyle\mean |\sS_{\ell,n}|_\mathrm{rel}=\Phi(-1)+\cO(\ell^{-1})+\cO(n^{-2/3}),$
  where the implied constants in $\cO(\ell^{-1})$ and $\cO(n^{-2/3})$ are independent of $\ell$ and $n$.
\end{enumerate}
\end{thm}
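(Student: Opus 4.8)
The plan is to compute $\mean |\sS_{\ell,n}|_\mathrm{rel}$ by conditioning on the matrices $Q_1,\ldots,Q_\ell$ and then integrating over the sphere $S^\ell$. For fixed $Q_1,\ldots,Q_\ell$, a point $x=(x_0,x_1,\ldots,x_\ell)\in S^\ell$ belongs to $\sS_{\ell,n}$ if and only if $x_0\mathbbm{1}+\tfrac{1}{\sqrt{2n\ell}}(x_1Q_1+\cdots+x_\ell Q_\ell)\in\cP_n$, i.e.\ all eigenvalues of $x_1Q_1+\cdots+x_\ell Q_\ell$ are at least $-\sqrt{2n\ell}\,x_0$, which (for $x_0>0$) is equivalent to $\lambda_{\min}\!\big(\tfrac{x_1Q_1+\cdots+x_\ell Q_\ell}{\sqrt{2n\ell}}\big)\geq -x_0$. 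So
\[
|\sS_{\ell,n}|_\mathrm{rel}=\frac{1}{|S^\ell|}\int_{S^\ell}\mathbbm{1}\Big\{x_0+\lambda_{\min}\big(\tfrac{x_1Q_1+\cdots+x_\ell Q_\ell}{\sqrt{2n\ell}}\big)\geq 0\Big\}\,\d x,
\]
and after taking $\mean$ and using Fubini, the expected relative volume is the probability (over both the $Q_i$ and a uniform $x\in S^\ell$) of the event in the indicator.

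The key simplification is a change of variables on the sphere. Write $x_0=\cos\theta$ and $(x_1,\ldots,x_\ell)=\sin\theta\cdot u$ with $u\in S^{\ell-1}$; then $x_1Q_1+\cdots+x_\ell Q_\ell=\sin\theta\cdot(u_1Q_1+\cdots+u_\ell Q_\ell)$. By orthogonal invariance of $\mathrm{GOE}(n)$ and independence of the $Q_i$, the matrix $Q:=\tfrac{1}{\sqrt{\ell}}(u_1Q_1+\cdots+u_\ell Q_\ell)$ is itself $\mathrm{GOE}(n)$-distributed and independent of $u$ (for each fixed $u$, it is a normalized sum of i.i.d.\ Gaussian matrices). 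Hence the event becomes $\cos\theta+\sin\theta\cdot\tfrac{1}{\sqrt{2n}}\lambda_{\min}(Q)\geq 0$, i.e.\ $\cot\theta\geq -\tlambdamin(Q)$ using \eqref{smallest_EV_scaled}. Now the marginal density of $\theta\in[0,\pi]$ under the uniform measure on $S^\ell$ is proportional to $\sin^{\ell-1}\theta$, so conditionally on $Q$ the probability of $\{\cot\theta\geq -\tlambdamin(Q)\}$ is exactly the value at $-\tlambdamin(Q)$ of the CDF of $\cot\theta$ where $\theta$ has density $\propto\sin^{\ell-1}\theta$. A standard computation identifies $\cot\theta$ (for such $\theta$) with (a scalar multiple of) Student's $t$ with $\ell$ degrees of freedom: if $g_0,g_1,\ldots,g_\ell$ are i.i.d.\ standard Gaussians then $x=g/\|g\|$ is uniform on $S^\ell$ and $x_0/\sqrt{x_1^2+\cdots+x_\ell^2}=g_0/\sqrt{g_1^2+\cdots+g_\ell^2}=\tfrac{1}{\sqrt\ell}\cdot t_\ell$; one checks the normalization cancels correctly so that $\Prob\{\cot\theta\geq -s\}=F_\ell(s)$. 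Taking expectation over $Q\sim\mathrm{GOE}(n)$ yields part (1).

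For part (2) I would split the error into an $\ell$-part and an $n$-part. As $\ell\to\infty$, Student's $t_\ell$ converges to a standard normal, and quantitatively $\sup_x|F_\ell(x)-\Phi(x)|=\cO(\ell^{-1})$ (a classical Edgeworth/Berry–Esseen-type bound for the $t$-distribution; since $|F_\ell|\le 1$ this also controls the expectation). Replacing $F_\ell$ by $\Phi$ in part (1) therefore costs $\cO(\ell^{-1})$ and reduces the problem to estimating $\mean_{Q\sim\mathrm{GOE}(n)}\Phi\big(\tlambdamin(Q)\big)$. Now $\tlambdamin(Q)=\lambda_{\min}(Q)/\sqrt{2n}$ concentrates at $-1$ by the semicircle law / edge asymptotics: $\lambda_{\min}(Q)=-2\sqrt{n}+\cO(n^{-1/6})$ in expectation with fluctuations of order $n^{-1/6}$ on the Tracy–Widom scale, so $\tlambdamin(Q)=-1+\cO_{\Prob}(n^{-2/3})$. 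Since $\Phi$ is $1$-Lipschitz, $|\mean\Phi(\tlambdamin(Q))-\Phi(-1)|\le\mean|\tlambdamin(Q)-(-1)|$, and known moment bounds for the smallest eigenvalue of $\mathrm{GOE}(n)$ (e.g.\ via the rigidity estimate $\mean|\lambda_{\min}(Q)+2\sqrt n|=\cO(n^{-1/6})$, giving $\mean|\tlambdamin(Q)+1|=\cO(n^{-2/3})$) finish the claim. The two error contributions add up, giving $\mean|\sS_{\ell,n}|_\mathrm{rel}=\Phi(-1)+\cO(\ell^{-1})+\cO(n^{-2/3})$ with universal implied constants.

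The main obstacle is the $\cO(n^{-2/3})$ estimate: one needs a genuine (non-asymptotic) bound on $\mean|\lambda_{\min}(Q)+2\sqrt n|$ of the right order for $\mathrm{GOE}(n)$, rather than a distributional limit. This can be extracted from existing edge-rigidity results in random matrix theory (the relevant statement is that the extreme eigenvalue lies within $\cO(n^{-1/6})$ of $-2\sqrt n$ with overwhelming probability, with sub-exponential tails, which upgrades to the $L^1$ bound after integrating the tail), and I would cite those; verifying that the tail decay is strong enough to kill the contribution of the event $\{|\lambda_{\min}(Q)+2\sqrt n|$ large$\}$ in the expectation is the one place where care is required. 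The $\cO(\ell^{-1})$ bound for $\sup_x|F_\ell-\Phi|$ is classical and essentially routine.
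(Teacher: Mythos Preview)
Your proposal is correct and follows essentially the same route as the paper: for part (1) both arguments use the Gaussian representation of the uniform measure on $S^\ell$, orthogonal invariance to collapse $\sum_i x_iQ_i$ to a single $\mathrm{GOE}(n)$ matrix, and the identification of the resulting ratio with Student's $t_\ell$; for part (2) your argument is the paper's (a Lipschitz bound combined with the Ledoux--Rider $L^1$ edge estimate $\mean|\tlambdamin+1|=O(n^{-2/3})$, together with $\sup_x|F_\ell-\Phi|=O(\ell^{-1})$), only with the two approximations applied in the opposite order. One normalization slip to fix in part (1): for $u\in S^{\ell-1}$ it is $\sum_i u_iQ_i$, not $\tfrac{1}{\sqrt\ell}\sum_i u_iQ_i$, that is $\mathrm{GOE}(n)$-distributed, and correspondingly $\cot\theta$ has the law of $t_\ell/\sqrt\ell$ rather than $t_\ell$ --- these two compensating errors give the right final formula, but the clean version is to set $Q:=\sum_i u_iQ_i$ and write the event as $\sqrt\ell\,\cot\theta\ge -\tlambdamin(Q)$.
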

Note that $\Phi(-1)\approx 0.1587$. This means that asymptotically (when both $n$ and $\ell$ go to $+\infty$) the average volume of a random spectrahedron is at least $15\%$ of the volume of the sphere.
In the following theorem, whose proof is given in \cref{sec:main2}, we are interested in the expected volume of the boundary of a random spectrahedron.
\begin{thm}[Expected volume of the boundary of the spectrahedron]\label{main2} Let $\chi^2_{\ell-1}$ denote the chi-square distribution with $\ell-1$ degrees of freedom \cite[Chapter 18]{NormanLJohnson1995} and define the function
    $$f_{\ell,n}(x) = \frac{\ell}{\ell+x^2}\,\mean\limits_{w\sim \chi^2_{\ell-1}}\,\sqrt{1+\frac{x^2}{\ell} + \frac{w}{2n\ell}}$$
Then:
\begin{enumerate}
  \item
$\displaystyle\mean \vert \partial \sS_{\ell, n}\vert_\mathrm{rel}= \mean\limits_{Q\sim \mathrm{GOE}(n)}\,f_{\ell,n}(\tlambdamin(Q)) .$
\item $\displaystyle\mean \lvert \partial \sS_{\ell, n}\rvert_\mathrm{rel}= 1-\tfrac{1}{2\ell}+\cO(\ell^{-2})+\mathcal{O}(n^{-1/2}),$
where the implied constants in $\cO(\ell^{-2})$ and $\cO(n^{-1/2})$ are independent of $\ell$ and $n$.
\end{enumerate}
\end{thm}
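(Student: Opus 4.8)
The plan is to exhibit $\partial\sS_{\ell,n}$ as the zero set on $S^\ell$ of a single scalar function and compute its expected volume by a Kac--Rice (integral-geometry) argument. Write $x=(x_0,y)\in S^\ell$ with $y=(x_1,\dots,x_\ell)\in\HR^\ell$ and $Q_y:=x_1Q_1+\dots+x_\ell Q_\ell$. Since $\lambda_{\min}(c\,\mathbbm{1}+A)=c+\lambda_{\min}(A)$, the matrix $x_0\mathbbm{1}+\tfrac1{\sqrt{2n\ell}}Q_y$ is positive semidefinite and singular exactly when $h(x):=x_0+\tfrac1{\sqrt{2n\ell}}\lambda_{\min}(Q_y)=0$; hence $\partial\sS_{\ell,n}=\{x\in S^\ell:h(x)=0\}$. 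The function $h$ is positively $1$-homogeneous, and almost surely it fails to be real analytic only on the locus where $\lambda_{\min}(Q_y)$ is a repeated eigenvalue, which meets $\{h=0\}$ in $(\ell-1)$-measure zero and may be discarded. Therefore
\[
\mean\,\lvert\partial\sS_{\ell,n}\rvert=\int_{S^\ell}\mean\!\bigl[\,\lVert\nabla_{S^\ell}h(x)\rVert\ \big|\ h(x)=0\,\bigr]\,p_{h(x)}(0)\,\d x,
\]
where $p_{h(x)}$ is the density of the scalar random variable $h(x)$.

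Next I would evaluate the two ingredients. If $\lVert y\rVert=r>0$ then $Q_y/r\sim\mathrm{GOE}(n)$, so $h(x)=x_0+\tfrac r{\sqrt{2n\ell}}\Lambda$ with $\Lambda$ distributed as the smallest eigenvalue of $\mathrm{GOE}(n)$, whence $p_{h(x)}(0)=\tfrac{\sqrt{2n\ell}}{r}\,p_\Lambda\!\bigl(-\tfrac{x_0\sqrt{2n\ell}}{r}\bigr)$. By Euler's relation, on $\{h=0\}$ the Euclidean gradient of the $1$-homogeneous extension of $h$ is already tangent to $S^\ell$, and first-order eigenvalue perturbation theory gives it as $\bigl(1,\tfrac1{\sqrt{2n\ell}}(v^\top Q_1v,\dots,v^\top Q_\ell v)\bigr)$, where $v$ is the unit $\lambda_{\min}$-eigenvector of $Q_y$. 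Using orthogonal invariance to rotate $y$ onto $\HR_{>0}e_1$: then $Q:=Q_y/r\sim\mathrm{GOE}(n)$, $v^\top Qv=\lambda_{\min}(Q)=\Lambda$, while conditionally on $v$ the other $\ell-1$ scalars $v^\top(Q_j/r)v$ ($j\ge2$) are i.i.d.\ $N(0,1)$ and independent of $Q$; hence $\lVert\nabla_{S^\ell}h(x)\rVert^2=1+\tfrac1{2n\ell}(\Lambda^2+W)$ with $W\sim\chi^2_{\ell-1}$ independent of $\Lambda$. Conditioning on $h(x)=0$ pins $\Lambda^2=2n\ell\,x_0^2/r^2$ but leaves the law of $W$, so $\mean[\lVert\nabla_{S^\ell}h(x)\rVert\mid h(x)=0]=\mean_{W\sim\chi^2_{\ell-1}}\sqrt{1+\tfrac{x_0^2}{r^2}+\tfrac{W}{2n\ell}}$. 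Finally, slicing $S^\ell$ by the coordinate $t=x_0$ (the slice at height $t$ carrying measure $(1-t^2)^{(\ell-2)/2}\lvert S^{\ell-1}\rvert\,\d t$) and substituting $\mu=-t\sqrt{2n\ell}/\sqrt{1-t^2}$, which sweeps $\HR$ and turns $p_\Lambda(\cdot)$ into $p_\Lambda(\mu)$, all powers of $1-t^2$ and the Jacobian of the substitution fuse into a single factor, and the integral over $S^\ell$ collapses to $\lvert S^{\ell-1}\rvert\cdot\mean_{Q\sim\mathrm{GOE}(n)}f_{\ell,n}(\tlambdamin(Q))$. This is part~(1). (Equivalently one may parametrize $\partial\sS_{\ell,n}$ as the radial projection to $S^\ell$ of the graph of $y\mapsto-\tfrac1{\sqrt{2n\ell}}\lambda_{\min}(Q_y)$ and compute the normal Jacobian over $S^{\ell-1}$ directly; it yields the same integrand, a useful cross-check.)

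For part~(2) I would expand $f_{\ell,n}$. Writing $\sqrt{1+\tfrac{x^2}{\ell}+\tfrac{w}{2n\ell}}=\sqrt{1+\tfrac{x^2}{\ell}}\,\bigl(1+O(w/n)\bigr)$ and using $\mean_{w\sim\chi^2_{\ell-1}}w=\ell-1$, the $\chi^2_{\ell-1}$-average contributes a factor $1+O(1/n)$ uniformly in $\ell$; this reduces the task to estimating $\mean_{Q}\bigl[\tfrac{\ell}{\ell+\tlambdamin(Q)^2}\sqrt{1+\tlambdamin(Q)^2/\ell}\,\bigr]$. One then invokes that $\tlambdamin(\mathrm{GOE}(n))$ concentrates at the edge value $-1$ with $\mean\bigl|\tlambdamin(\mathrm{GOE}(n))+1\bigr|$ small — quantitatively from $\mean\lambda_{\min}(\mathrm{GOE}(n))=-\sqrt{2n}+O(1)$ together with Gaussian concentration of $\lambda_{\min}$ in the matrix entries, which also controls the tails on the exponentially unlikely event $\{\lvert\tlambdamin\rvert>2\}$ — combined with $\tfrac{\ell}{\ell+1}\sqrt{1+1/\ell}=\bigl(1+\tfrac1\ell\bigr)^{-1/2}=1-\tfrac1{2\ell}+O(\ell^{-2})$ and a uniform Lipschitz bound for $s\mapsto\tfrac{\ell}{\ell+s^2}\sqrt{1+s^2/\ell}$ near $s=-1$. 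Adding the three error contributions yields $\mean\lvert\partial\sS_{\ell,n}\rvert_{\mathrm{rel}}=1-\tfrac1{2\ell}+O(\ell^{-2})+O(n^{-1/2})$ with absolute implied constants.

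The step I expect to be the main obstacle is the second one: keeping the spherical bookkeeping (Euler's identity on $\{h=0\}$, the slice measure $(1-t^2)^{(\ell-2)/2}$, the Jacobian of $t\mapsto\mu$) in step with the perturbation formula for $\nabla\lambda_{\min}$, and — the crux — verifying rigorously that the $\ell-1$ transverse quantities $v^\top Q_jv$ are standard Gaussian and jointly independent of $\lambda_{\min}(Q_1)$; this independence, intuitively clear but needing the orthogonal-invariance argument carried out carefully, is exactly what produces the clean $\chi^2_{\ell-1}$ in $f_{\ell,n}$. A secondary difficulty is that part~(2) demands error bounds uniform in both $\ell$ and $n$, which forces the use of non-asymptotic deviation estimates for the smallest eigenvalue of $\mathrm{GOE}(n)$ rather than soft limit theorems.
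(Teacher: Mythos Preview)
Your proposal is correct and follows essentially the same route as the paper: Kac--Rice on $S^\ell$ for the smallest-eigenvalue function, reduction by $O(\ell)$-invariance to a one-parameter integral in $x_0$, Hadamard's variation formula for the gradient, identification of the $\ell-1$ transverse components as a $\chi^2_{\ell-1}$, and the substitution $t\mapsto\mu$ to turn the $x_0$-integral into an expectation over $\tlambdamin$. Two small remarks. First, your use of Euler's relation for the $1$-homogeneous $h$ to see tangentiality of $\nabla h$ on $\{h=0\}$ is slicker than the paper's explicit subtraction of $\langle\nabla h,x\rangle x$; both give the same cancellation. Second, a typo: after rotating $y$ to $re_1$ the transverse gradient entries are $v^\top Q'_jv$ with $Q'_j\sim\mathrm{GOE}(n)$ (no factor $1/r$); your formula $v^\top(Q_j/r)v$ would give variance $1/r^2$, not $1$. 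The independence you flag as ``the crux'' is handled in the paper exactly as you anticipate: one diagonalizes $Q'_1$ first, so $v=e_1$ and $v^\top Q'_jv=(Q'_j)_{11}\sim N(0,1)$ independently of $Q'_1$.

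For part~(2) the paper also reduces to evaluating $f_{\ell,n}(-1)$ plus a Lipschitz-times-deviation error. Two minor differences: the paper controls the $\chi^2$-average via $\sqrt{a+b}\le\sqrt a+\sqrt b$ (giving $O(n^{-1/2})$ directly) rather than your first-order expansion, which needs a separate tail argument for large $w$; and the paper quotes the Ledoux--Rider bound $\mean|\tlambdamin+1|=O(n^{-2/3})$, whereas your combination of $\mean\lambda_{\min}=-\sqrt{2n}+O(1)$ with Gaussian concentration yields the weaker $O(n^{-1/2})$, which is still enough since the $\chi^2$-error already sits at that level.
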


For the average number of singular points $\sigma_n$ on $\partial \sS_{3,n}$ and number of singular points $\rho_n$ on $\Sigma_{3,n}$ the result is more delicate to state. We denote the dimension of $\textrm{Sym}(n, \R)$ by $N:=\frac{n(n+1)}{2}$ and the unit sphere in $\textrm{Sym}(n,\R)$ by $S^{N-1}:=\{Q\in \textrm{Sym}(n,\R)\,|\, \textrm{tr}(Q^2) = 1\}$. Let~$\Delta\subset S^{N-1}$ be the set of symmetric matrices of unit norm and with repeated eigenvalues and let $\Delta_1\subset \Delta$ be its subset consisting of symmetric matrices whose two \textit{smallest} eigenvalues coincide:
\begin{align*}\Delta&:=\{Q\in \textrm{Sym}(n, \R)\cap S^{N-1}\,|\,  \textrm{ $\lambda_i(Q)=\lambda_{j}(Q)$ for some $i\neq j$}\},\\
\Delta_1&:=\{Q\in \textrm{Sym}(n, \R)\cap S^{N-1}\,|\,  \lambda_1(Q)=\lambda_2(Q)\}.
\end{align*}
Note that $\Delta$ and $\Delta_1$ are both semialgebraic subsets of $S^{N-1}$ of codimension two; $\Delta$ is actually algebraic. The following theorem relates $\mean\sigma_n$ and $\mean\rho_n$ to the volumes of $\Delta_1$ and $\Delta$, respectively. We give its proof in Section \ref{sec:nodes}. In the following $|\Delta_1|_\mathrm{rel}:=\tfrac{|\Delta_1|}{|S^{N-3}|}$ and $|\Delta|_\mathrm{rel}:=\tfrac{|\Delta|}{|S^{N-3}|}$.

\begin{thm}[The average number of singular points]\label{main3}
The average number of singular points on the boundary of a random $3$-dimensional spectrahedron $\sS_{3, n}\subset S^3$ equals
\begin{enumerate}\item
$\displaystyle\mean \sigma_n = 2|\Delta_1|_\mathrm{rel}.$
\end{enumerate}
The average number of singular points on the symmetroid $\Sigma_{3,n}\subset S^{3}$ equals
\begin{enumerate}[resume]
\item $\displaystyle \mean \rho_n =  2|\Delta|_\mathrm{rel}.$
\end{enumerate}
\end{thm}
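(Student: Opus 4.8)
The plan is to realize the singular points of the symmetroid $\Sigma_{3,n}$ (respectively of $\partial\sS_{3,n}$) as a fiber of an incidence correspondence and apply the integral geometry / coarea argument that converts a count into a volume of $\Delta$ (respectively $\Delta_1$). First I would record the algebraic-geometry fact underlying everything: for $x\in S^3$ the matrix $M(x)=x_0\mathbbm 1+\tfrac{1}{\sqrt{2n\ell}}(x_1Q_1+x_2Q_2+x_3Q_3)$ lies on $\Sigma_{3,n}$ when $\det M(x)=0$, and $x$ is a \emph{singular} point of $\Sigma_{3,n}$ precisely when $\ker M(x)$ has dimension $\ge 2$, equivalently when $M(x)$ has a repeated eigenvalue equal to $0$; this is the standard statement that the discriminant of the symmetroid is the locus where corank $\ge 2$. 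Thus a singular point corresponds to a pair $(x,W)$ with $x\in S^3$, $W\in\G(2,\R^n)$, and $M(x)|_W=0$. This gives the incidence variety
$$\cI=\{(x,W)\in S^3\times\G(2,\R^n)\mid M(x)|_W=0\},$$
and $\rho_n$ is (generically) the cardinality of the projection of $\cI$ to $S^3$, while $\sigma_n$ counts those points that additionally lie in $\partial\sS_{3,n}$, i.e. where $M(x)\succeq 0$, equivalently where $0$ is the \emph{smallest} eigenvalue.

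Next I would run the expected-count computation. Fixing a generic $2$-plane $W$, the condition $M(x)|_W=0$ is three scalar linear equations in $x\in\R^4$ (the matrix $M(x)$ restricted to $W$ is a $2\times2$ symmetric matrix, three entries), so generically it cuts out a line through the origin, i.e. two antipodal points on $S^3$; the key point is that \emph{for each valid $(x,W)$ there is a unique $W=\ker M(x)$}, so there is no overcounting and we may integrate over $\G(2,\R^n)$ with its invariant probability measure. I would then use the orthogonal invariance of the GOE: conditioning on the $2$-plane $W=\mathrm{span}\{e_1,e_2\}$ (say), the event that there exists $x\in S^3$ with $M(x)|_W=0$ and the location of that $x$ are governed by the joint law of the three $2\times2$ blocks of $(\mathbbm 1,Q_1,Q_2,Q_3)$ on $W$, together with the requirement that the corresponding point be nondegenerate. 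Carrying this through with the coarea formula, the expected number of solutions $x$ equals a Gaussian integral that, after the change of variables sending $(Q_1,Q_2,Q_3)\mapsto M(x)$ for the distinguished $x$, becomes exactly $2|\Delta|_\mathrm{rel}$: the factor $2$ from the antipodal pair on $S^3$, and $|\Delta|_\mathrm{rel}$ as the relative volume of the codimension-two locus of repeated-eigenvalue matrices, which is precisely the pushforward measure one lands on. For part (2), the only change is that $0$ must be the smallest eigenvalue (so that $M(x)\in\cP_n$ near $x$), which replaces $\Delta$ by its sub-locus $\Delta_1$ where the two \emph{smallest} eigenvalues coincide; the same factor $2$ persists.

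Concretely, the cleanest route is the kinematic / Poincaré-type formula: parametrize points of $\cI$ near a generic solution by $W$, push the invariant measure on $\G(2,\R^n)$ forward through the map $W\mapsto (\text{the }x\text{ with }M(x)|_W=0)$, and identify the resulting density on $S^3$ with the expected discriminant measure. Equivalently, one can go the other direction: the matrix $M(x)$ as $x$ ranges over $S^3$ traces a random $3$-sphere (a linear section through $\mathbbm 1$) in $\mathrm{Sym}(n,\R)$, after normalizing to $S^{N-1}$ this is a uniformly random great $3$-sphere up to the deterministic shift by $\mathbbm 1$; the singular points of the symmetroid are its intersections with $\Delta$, and the expected number of intersection points of a random great $k$-subsphere with a fixed codimension-$k$ submanifold is its relative volume times a combinatorial constant (here $k=2$, constant $2$ because a $3$-sphere meets a codimension-$2$ set in finitely many points, two per "sheet"). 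The hard part — and the step I would spend the most care on — is justifying that the shift by $\mathbbm 1$ does not change the count: one must argue that intersecting $\Delta$ with the \emph{affine} great sphere $\mathbbm 1 + (\text{linear span})$ normalized to $S^{N-1}$ has the same expected cardinality as intersecting with a genuinely uniform great $3$-sphere, because $\Delta$ is a cone (scale-invariant) and the normalization is a radial projection — so the affine section and a linear section through the origin have the same image under radial projection, up to the antipodal identification that supplies the $2$. Handling this identification carefully, and checking that the generic-transversality hypotheses (finitely many singular points, all of corank exactly $2$, none at the "equator" $x_0=0$) hold almost surely so that the naive count is the right one, is where the technical work lies; the reduction to $\Delta_1$ in part (2) is then a routine restriction of the same argument to the half where $0=\lambda_{\min}$.
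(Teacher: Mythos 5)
Your proposal misses the key algebraic reduction that the paper's argument turns on, and both of your two routes have genuine errors.

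\textbf{The incidence variety is the wrong one.} You write $\cI=\{(x,W): M(x)\vert_W=0\}$ and count "three scalar linear equations," treating $M(x)\vert_W$ as the $2\times 2$ quadratic form. But the condition for $x$ to be a singular point of the symmetroid is $W\subseteq\ker M(x)$, i.e.\ $M(x)w=0$ for all $w\in W$, which imposes $3+2(n-2)=2n-1$ conditions (the $2\times2$ block \emph{and} the $2\times(n-2)$ off-diagonal block of $M(x)$ in a basis adapted to $W$), not $3$. With only the weak quadratic-form condition the incidence variety has dimension $3+2(n-2)-3=2n-4$, its projection to $S^3$ is essentially onto, and the fiber over a generic $W$ consists of two points that are \emph{not} singular points of the symmetroid for almost every $W$ — so "integrating over $\G(2,\HR^n)$" computes the wrong thing. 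Your later remark that "for each valid $(x,W)$ there is a unique $W=\ker M(x)$" tacitly assumes the strong condition, so the two halves of the argument contradict each other.

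\textbf{The great-sphere picture has a dimension mismatch.} You say the singular points of the symmetroid are the intersections of a random great $3$-sphere in $S^{N-1}$ with $\Delta$. But $\Delta$ has codimension $2$ in $S^{N-1}$, so a great $S^3$ meets $\Delta$ in a set of dimension $3+(N-3)-(N-1)=1$, not finitely many points; corank $\ge 2$ is codimension \emph{three} in $\mathrm{Sym}(n,\HR)$, while "some repeated eigenvalue" is codimension two. Your sentence "here $k=2$... a $3$-sphere meets a codimension-$2$ set in finitely many points" is internally inconsistent and incorrect. You then spend your care on showing that the shift by $\mathbbm{1}$ "does not change the count," but that is not the real issue: the shift by $\mathbbm{1}$ is precisely the mechanism that \emph{drops one dimension}, and your treatment of it as a technical annoyance to be waved away is where the argument breaks.

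\textbf{What's missing.} The proof in the paper rests on the elementary but crucial identity $\lambda_i(x_0\mathbbm{1}+R(x))=x_0+\lambda_i(R(x))$, where $R(x)=x_1R_1+x_2R_2+x_3R_3$. From it, a singular point of $\Sigma_{3,n}$ is equivalent to: (i) $R(x)$ has two equal eigenvalues (a condition on the $2$-sphere $\{(x_1,x_2,x_3)\}$ alone), and (ii) $x_0$ equals minus that common eigenvalue (which \emph{uniquely determines} $x_0$). This sets up a bijection between $\mathrm{Sing}(\Sigma_{3,n})\subset S^3$ and the intersection of $\Delta$ (codimension $2$) with the random great $S^2$ cut out by the $3$-dimensional, genuinely uniformly distributed, linear space $V=\mathrm{span}\{R_1,R_2,R_3\}$ — now the dimensions match, the intersection is generically finite, and the Poincar\'e/integral-geometry formula gives $\mean\#(V\cap\Delta)=2\lvert\Delta\rvert_\mathrm{rel}$. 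For part (1), requiring the two vanishing eigenvalues of $M(x)$ to be the smallest replaces $\Delta$ by $\Delta_1$. Without the eigenvalue-shift identity your argument never reaches a codimension-$2$ intersection, and neither the incidence-variety route nor the kinematic route, as you have set them up, closes.
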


In \cite[Thm. 1.1]{discr_volume} it was proved that $|\Delta|= \tbinom{n}{2}|S^{N-3}|$. This immediately yields the following.
\begin{cor}\label{cor_rho}
$\displaystyle\mean \rho_n = n(n-1).$
\end{cor}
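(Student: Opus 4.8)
The plan is to combine part (2) of \cref{main3}, which asserts $\mean\rho_n = 2|\Delta|_{\mathrm{rel}}$, with the volume computation from \cite[Thm.~1.1]{discr_volume}. Since all the analytic work is already packaged into \cref{main3}, this corollary is essentially a one-line consequence; the only thing to verify is that the normalizations match up.

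First I would recall the definition $|\Delta|_{\mathrm{rel}} = \tfrac{|\Delta|}{|S^{N-3}|}$, where $N = \tfrac{n(n+1)}{2}$ is the dimension of $\textrm{Sym}(n,\R)$ and $\Delta \subset S^{N-1}$ is the (codimension-two, hence $(N-3)$-dimensional) algebraic subvariety of unit-norm symmetric matrices with a repeated eigenvalue. The cited result states $|\Delta| = \binom{n}{2}|S^{N-3}|$, so dividing both sides by $|S^{N-3}|$ gives $|\Delta|_{\mathrm{rel}} = \binom{n}{2}$.

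Substituting into part (2) of \cref{main3} yields
$$\mean\rho_n = 2|\Delta|_{\mathrm{rel}} = 2\binom{n}{2} = 2\cdot\frac{n(n-1)}{2} = n(n-1),$$
which is the claimed formula. There is no real obstacle here: the proof is just arithmetic once one has \cref{main3}(2) and the volume formula for $\Delta$ in hand. The only point worth a moment's care is confirming that the normalizing sphere in \cite[Thm.~1.1]{discr_volume} is $S^{N-3}$ with the same dimension convention used here (i.e.\ that $\Delta$ genuinely has dimension $N-3$, consistent with its being codimension two in $S^{N-1}$), so that the ratio $|\Delta|/|S^{N-3}|$ is the same dimensionless quantity in both sources; this matches the statement in the excerpt that $\Delta$ is a codimension-two (algebraic) subset of $S^{N-1}$.
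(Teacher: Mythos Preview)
Your proof is correct and matches the paper's own argument exactly: the corollary is deduced in one line from \cref{main3}(2) together with the identity $|\Delta|=\binom{n}{2}|S^{N-3}|$ from \cite[Thm.~1.1]{discr_volume}, giving $\mean\rho_n=2\binom{n}{2}=n(n-1)$.
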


For the expectation of $\sigma_n$ we are lacking such an explicit formula. However combining Theorem \ref{main3} (1) with the formula in \cite[Remark 6]{discr_volume} one obtains
\begin{equation}\label{sigma_n_formula}
\mean \sigma_n  = \frac{2^{n}}{\sqrt{\pi} \,n!}\, \binom{n}{2}\, \, \int_{u\in\HR} \mean_{Q\sim\mathrm{GOE}(n-2)} \big[ \det(Q-u\mathbbm{1})^2  \mathbf{1}_{\{Q-u\mathbbm{1}\in \cP_n\}}  \big]e^{-u^2}\,\d u.
\end{equation}
We expect that $\lim_{n\to \infty}\frac{\mean \sigma_n}{\mean \rho_n}=0,$ but it is difficult to predict how small is $\mean \sigma_n$ compared to~$\mean\rho_n$. The main challenge is to handle the indicator function $\mathbbm{1}_{\{Q-u\mathbbm{1}\in \cP_n\}}$ in the integral above.

Quartic spectrahedra \cite{ORSV2015} are a special case of our study, corresponding to $n=4$. In this case the random symmetroid surface
$$\Sigma_{3,n}=\{x\in S^{3}\mid \det (x_0 \mathbbm{1}+\tfrac{1}{\sqrt{ 2n\ell}} (x_1 Q_1 + x_2 Q_2+ x_3 Q_3))=0\},\; Q_1,Q_2,Q_3\stackrel{\text{i.i.d.}}{\sim} \mathrm{GOE}(n),$$
has degree four, since $\mathbbm{1}, Q_1, Q_2, Q_3\in \textrm{Sym}(4, \R)$. In \cite{DegIt}
Degtyarev and Itenberg proved that all possibilities for $\sigma_4$ and $\rho_4$ are realized by some generic spectrahedra $\sS_{3,4}$ and their symmetroids~$\Sigma_{3,4}$ under the following constraints:
\begin{equation}\label{sigma_4}\sigma_4 \text{ is even\quad and\quad} 0\leq \sigma_4\leq 10;\qquad
\rho_4 \text{ is a multiple of $4$\quad and\quad} 4\leq\rho_4\leq 20.\end{equation}
Degtyarev and Itenberg proved this for the spectrahedron and its symmetroid in projective space, that is why in our condition \eqref{sigma_4} above we have to double their estimates. More generally,
when $n\equiv 2 \bmod 4$ we have the deterministic bound $\rho_n\geq 2$; see \cref{rho_deterministic}.
The inequality $\rho_4\geq 4$ is a notable fact, which has topological reasons (see \cite{FFL, Lax82, FRS84} for related questions).

An ``average picture'' of the Degtyarev and Itenberg result is given in the following proposition.
\begin{prop}[The average number of nodes on the boundary of a quartic spectrahedron]We have \label{prop:nodes} $$\mean \sigma_4 =6-\frac{4}{\sqrt{3}}\approx 3.69\quad\text{and}\quad \mean\rho_4=12.$$
\end{prop}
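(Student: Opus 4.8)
The equality $\mean\rho_4 = 12$ is immediate from \cref{cor_rho}, which gives $\mean\rho_n = n(n-1)$, so all the work is in computing $\mean\sigma_4$. For this the plan is to use the explicit integral \eqref{sigma_n_formula} rather than to evaluate the volume $|\Delta_1|_{\mathrm{rel}}$ of a codimension-two subset of $S^{9}$ directly. Setting $n=4$ in \eqref{sigma_n_formula}, the prefactor $\frac{2^{n}}{\sqrt\pi\,n!}\binom{n}{2}$ collapses to $\frac{4}{\sqrt\pi}$ and the inner expectation is now over $\mathrm{GOE}(2)$, where everything is completely explicit:
$$\mean\sigma_4 \;=\; \frac{4}{\sqrt\pi}\int_{\R}\;\mean_{Q\sim\mathrm{GOE}(2)}\bigl[\det(Q-u\mathbbm{1})^2\,\mathbf{1}_{\{Q-u\mathbbm{1}\in\cP_2\}}\bigr]\,e^{-u^2}\,\d u.$$
From here I would reduce the right-hand side to a one-variable elementary integral in three steps.

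First, I would use that the eigenvalues $(\mu_1,\mu_2)$ of $Q\sim\mathrm{GOE}(2)$ have the symmetric joint density $\frac{1}{4\sqrt\pi}|\mu_1-\mu_2|\,e^{-(\mu_1^2+\mu_2^2)/2}$ on $\R^2$; in these terms $\det(Q-u\mathbbm{1})=(\mu_1-u)(\mu_2-u)$ and the constraint $Q-u\mathbbm{1}\in\cP_2$ reads $\min(\mu_1,\mu_2)\ge u$. After the substitution $\nu_i:=\mu_i-u$ the integral over $u$ is a one-dimensional Gaussian integral, which I would evaluate by completing the square; the upshot is
$$\mean\sigma_4 \;=\; \frac{1}{\sqrt{2\pi}}\int_0^{\infty}\!\!\int_0^{\infty}\nu_1^2\,\nu_2^2\,|\nu_1-\nu_2|\,e^{-(3\nu_1^2-2\nu_1\nu_2+3\nu_2^2)/8}\,\d\nu_1\,\d\nu_2.$$
Second, I would pass to polar coordinates $\nu_1=r\cos\theta$, $\nu_2=r\sin\theta$ on the first quadrant: then $3\nu_1^2-2\nu_1\nu_2+3\nu_2^2=r^2(3-\sin2\theta)$, the radial factor is the Gamma integral $\int_0^\infty r^6e^{-ar^2}\,\d r=\frac{15\sqrt\pi}{16}\,a^{-7/2}$, and one is left with
$$\mean\sigma_4 \;=\; 960\int_0^{\pi/2}\frac{\cos^2\theta\,\sin^2\theta\,|\cos\theta-\sin\theta|}{(3-\sin2\theta)^{7/2}}\,\d\theta.$$

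Third, I would use the symmetry $\theta\mapsto\frac{\pi}{2}-\theta$ to restrict to $[0,\frac{\pi}{4}]$, where $\cos\theta\ge\sin\theta$, and substitute $s=\sin2\theta$, so that $\cos\theta-\sin\theta=\sqrt{1-s}$ and $\cos2\theta=\sqrt{1-s^2}$; this gives
$$\mean\sigma_4 \;=\; 240\int_0^1\frac{s^2\,\d s}{(3-s)^{7/2}\sqrt{1+s}}.$$
The final substitution $1+s=4\sin^2\phi$ (hence $3-s=4\cos^2\phi$) turns this last integral into $\frac{1}{32}\int_{\pi/6}^{\pi/4}\bigl(9\sec^6\phi-24\sec^4\phi+16\sec^2\phi\bigr)\,\d\phi$, whose integrand has the elementary antiderivative $\tan\phi-2\tan^3\phi+\frac{9}{5}\tan^5\phi$. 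Evaluating at $\tan\frac{\pi}{4}=1$ and $\tan\frac{\pi}{6}=\frac{1}{\sqrt3}$ and multiplying by $240$ yields $\mean\sigma_4=\frac{15}{2}\bigl(\frac{4}{5}-\frac{8}{15\sqrt3}\bigr)=6-\frac{4}{\sqrt3}$.

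There is no conceptual difficulty in this argument once \eqref{sigma_n_formula} is available; the only point that needs care is the bookkeeping of constants — the normalization $4\sqrt\pi$ of the $\mathrm{GOE}(2)$ eigenvalue density, the powers of $2$ produced by the polar change of variables, and the factor $\Gamma(7/2)=\frac{15\sqrt\pi}{8}$ — so that the overall constant really collapses to $240$ (equivalently $\frac{15}{2}$ after all the reductions). One could alternatively evaluate $|\Delta_1|_{\mathrm{rel}}$ for $n=4$ directly from \cref{main3}(1), but this leads back to the same integral, so I would keep the route through \eqref{sigma_n_formula}.
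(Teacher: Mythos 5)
Your proposal is correct and follows essentially the same route as the paper: both start from \eqref{sigma_n_formula} with $n=4$, write out the $\mathrm{GOE}(2)$ expectation using the eigenvalue density $\tfrac{1}{4\sqrt\pi}\lvert\mu_1-\mu_2\rvert e^{-(\mu_1^2+\mu_2^2)/2}$, shift to $\alpha_i=\mu_i-u$, and integrate out $u$ by completing the square to arrive at the same first-quadrant integral with Gaussian weight $e^{-(3\alpha_1^2-2\alpha_1\alpha_2+3\alpha_2^2)/8}$. The only divergence is in how that final two-dimensional integral is evaluated: the paper restricts to $\alpha_1<\alpha_2$ and passes to the elementary symmetric polynomials $(a_1,a_2)=(\alpha_1+\alpha_2,\alpha_1\alpha_2)$, whose Jacobian $\lvert\alpha_1-\alpha_2\rvert$ cancels the Vandermonde factor and reduces the problem to a one-dimensional Gaussian-moment integral in $a_1$; you instead pass to polar coordinates, perform the radial $\Gamma$-integral $\int_0^\infty r^6e^{-ar^2}\,\d r=\tfrac{15\sqrt\pi}{16}a^{-7/2}$ first, and then evaluate the remaining $\sec$-power integral over $\phi\in[\pi/6,\pi/4]$. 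I checked your bookkeeping ($960\to240\to\tfrac{15}{2}$, the antiderivative $\tan\phi-2\tan^3\phi+\tfrac{9}{5}\tan^5\phi$, and the endpoint values $\tfrac{4}{5}$ and $\tfrac{8}{15\sqrt3}$), and everything is right, yielding $6-\tfrac{4}{\sqrt3}$. Both finishes are elementary and of comparable length; the paper's substitution is marginally slicker in that it explains the Vandermonde factor as a Jacobian rather than carrying it through the trigonometric manipulations, but nothing of substance hinges on the choice.
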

It would be interesting to understand the distribution of the random variables $\sigma_4, \rho_4$ and compare it with the deterministic picture in \cref{sigma_4}. See also \cref{fig1}.

\begin{figure}[t]
\includegraphics[width = 0.8\textwidth]{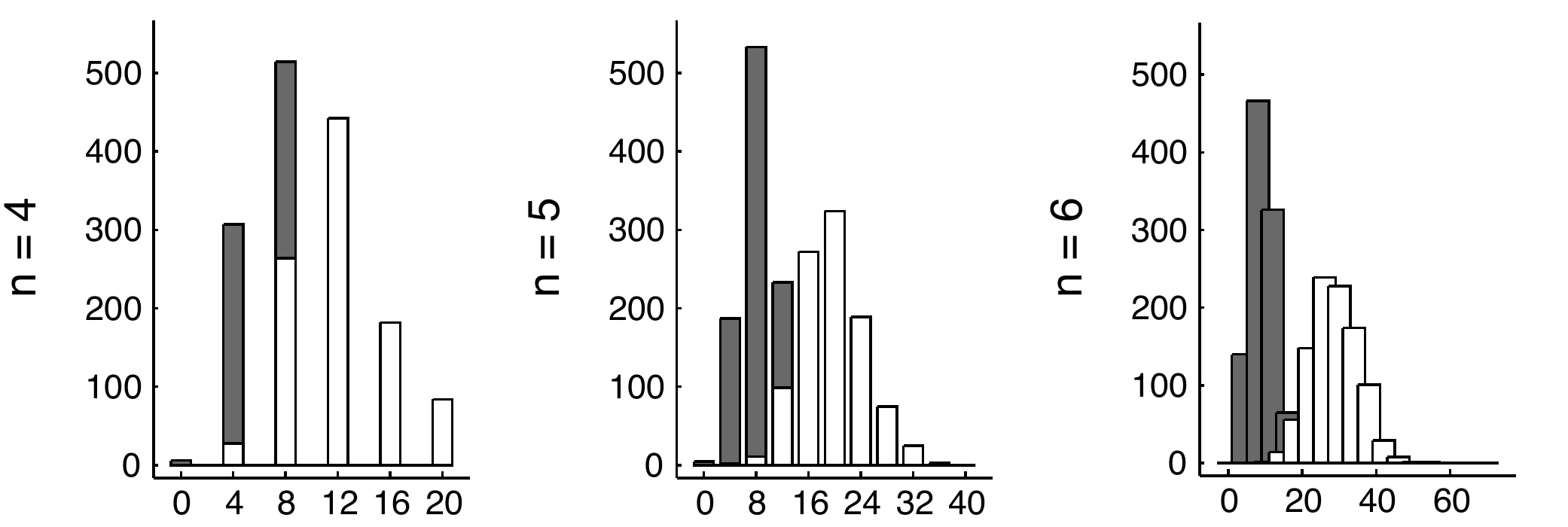}
\caption{The histograms show the outcome of the following numerical experiment. For each $n\in\{4,5,6\}$ we independently sampled 1000 triples $(Q_1,Q_2,Q_2)\sim \mathrm{GOE}(n)^3$. Then, we computed $\rho_n$ and $\sigma_n$ using the \texttt{Julia} \cite{julia} package \texttt{HomotopyContinuation.jl} \cite{HC}. The grey bars show the distribution of $2\sigma_n$ and the white bars the distribution of $\rho_n$. The empirical means are $\overline{\rho_4} = 12.12, \overline{\rho_5}=19.66, \overline{\rho_6}=28.948$
and
$\overline{2\sigma_4} = 7.416, \overline{2\sigma_5}=8.48, \overline{2\sigma_6}=9.3$.\label{fig1}}
\end{figure}

\subsection{Notation} Throughout the article some symbols are repeatedly used for the same purposes: $\textrm{Sym}(n, \R)$ stands for the space of $n\times n$ real symmetric matrices. For the rescaled $Q_i$ we write
$$ R_i=\frac{1}{\sqrt{ 2n\ell}} Q_i.$$
By $\cQ=(Q_1,\ldots,Q_\ell)\in (\textrm{Sym}(n,\mathbb{R}))^\ell$ and $\cR=(R_1,\ldots,R_\ell)\in (\textrm{Sym}(n,\mathbb{R}))^\ell$ we denote a collection of $\ell$ symmetric matrices and its rescaled version respectively. The $k$-dimensional sphere endowed with the standard metric is denoted $S^k$. The symbol $\mathbbm{1}$ stands for the unit matrix (of any dimension). For $x=(x_0,x_1,\ldots,x_\ell)\in S^\ell$ we denote the matrices
\be \label{Q_and_A} Q(x)=\sum_{i=1}^\ell x_iQ_i,\quad  A(x)=x_0\,\mathbbm{1} + \tfrac{1}{\sqrt{2n\ell}}Q(x)\ee
By $\sS_{\ell,n},\, \partial \sS_{\ell,n}$ and $\Sigma_{\ell,n}$ we denote a (random) spectrahedron, its boundary and a symmetroid hypersurface respectively.  Letters $\alpha,\lambda$ and $\mu$ are used to denote eigenvalues and $\tlambda=\tfrac{1}{\sqrt{2n}}\lambda$ stands for the rescaled eigenvalue~$\lambda$.

\subsection{Organization of the article}
The organization of the paper is as follows. In the next section we recall some known deviation inequalities for the smallest eigenvalue of a $\mathrm{GOE}(n)$-matrix. In Sections \ref{sec:proof1}--\ref{sec:nodes} we prove our main theorems. Finally, Section \ref{sec:quartic} deals with the case of quartic spectrahedra.

\section{Deviation inequalities for the smallest eigenvalue}\label{sec:deviation}
In this section we want to summarize known inequalities for the deviation of $\lambda_{\min}(Q)$ from its expected value in the $\textrm{GOE}(n)$ random matrix model. The results that we present are due to \cite{MichelLedoux2010}. Note that in that reference, however, the inequalities are given for the \emph{largest} eigenvalue $\lambda_{\max}(Q)$. Since the $\textrm{GOE}(n)$-distribution is symmetric around $0$, we have $\lambda_{\max}(Q) \sim -\lambda_{\min}(Q)$. Using this we translate the deviation inequalities for $\lambda_{\max}(Q)$ from \cite{MichelLedoux2010} into deviation inequalities for $\lambda_{\min}(Q)$. Furthermore, note that in \cite[(1.2)]{MichelLedoux2010} the variance for the $\mathrm{GOE}(n)$-ensemble is defined differently than it is here: eigenvalues of a random matrix in \cite{MichelLedoux2010} are $\sqrt{2}$ times eigenvalues in our definition.

We express the deviation inequalities in terms of the scaled eigenvalue $\tlambdamin(Q)$, cf. \cref{smallest_EV_scaled}.
The following Proposition is \cite[Theorem 1]{MichelLedoux2010}.
We will not need this result in the rest of the paper directly, but we decided to recall it here because it gives an idea of the behavior of the smallest eigenvalue of a random GOE$(n)$ matrix, in terms of which our theorem on the volume of random spectrahedra is stated.
\begin{prop}\label{prop_tail_bounds_GUE}
For some constant $C>0$, all $n\geq 1$ and $0<\epsilon <1$, we have
$$ \Prob\limits_{Q\in \mathrm{GOE}(n)}\set{\tlambdamin(Q)\leq -(1+\epsilon)} \leq C e^{-C^{-1} n \epsilon^{3/2}}$$
and
$$ \Prob\limits_{Q\in \mathrm{GOE}(n)}\set{\tlambdamin(Q)\geq -(1-\epsilon)} \leq Ce^{-C^{-1}n^2\epsilon^3}.$$
\end{prop}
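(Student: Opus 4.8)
The plan is to deduce this Proposition from the small-deviation inequalities of Ledoux and Rider \cite[Theorem 1]{MichelLedoux2010} for the \emph{largest} eigenvalue of a $\mathrm{GOE}$ matrix, by (i) matching their normalization of the ensemble with ours and (ii) exploiting the symmetry $Q\mapsto -Q$ of the $\mathrm{GOE}(n)$ distribution to pass from $\lambda_{\max}$ to $\lambda_{\min}$. In \cite{MichelLedoux2010} the eigenvalues are scaled so that the bulk of the spectrum fills an interval whose right endpoint is $\sqrt 2$ times our edge $\sqrt{2n}$, i.e. equal to $2\sqrt n$; with that scaling their Theorem~1 provides constants $c,C_0>0$ such that, for all $n\ge 1$ and all $0<\epsilon< 1$,
\begin{gather*}
\Prob\{\lambda_{\max}\ge 2\sqrt n\,(1+\epsilon)\}\le C_0\, e^{-c\,n\epsilon^{3/2}},\\
\Prob\{\lambda_{\max}\le 2\sqrt n\,(1-\epsilon)\}\le C_0\, e^{-c\,n^2\epsilon^{3}}.
\end{gather*}
The two exponents have a transparent meaning: pushing a single eigenvalue past the soft edge has rate $\sim n\epsilon^{3/2}$, while contracting the whole spectrum inward has rate $\sim n^2\epsilon^{3}$, and these are precisely the two regimes in the statement to be proved.

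First I would rewrite these inequalities in our normalization. Since the eigenvalues in \cite{MichelLedoux2010} equal $\sqrt 2$ times the eigenvalues of a matrix $Q\sim\mathrm{GOE}(n)$ in our sense, dividing both sides of each event by $\sqrt 2$ replaces the edge $2\sqrt n$ by $\sqrt{2n}$ and leaves the factors $1\pm\epsilon$ — in particular the parameter $\epsilon$ — untouched. Writing $\tlambda_{\max}(Q):=\lambda_{\max}(Q)/\sqrt{2n}$, the displayed bounds become $\Prob\{\tlambda_{\max}(Q)\ge 1+\epsilon\}\le C_0\, e^{-c\,n\epsilon^{3/2}}$ and $\Prob\{\tlambda_{\max}(Q)\le 1-\epsilon\}\le C_0\, e^{-c\,n^2\epsilon^{3}}$.

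Next comes the symmetry reduction. The density $\varphi(Q)=C_n^{-1}e^{-\Vert Q\Vert_F^2/2}$ is invariant under $Q\mapsto -Q$, so $-Q\sim\mathrm{GOE}(n)$ whenever $Q\sim\mathrm{GOE}(n)$; combined with $\lambda_{\min}(Q)=-\lambda_{\max}(-Q)$ this gives the distributional identity $\tlambdamin(Q)\overset{d}{=}-\tlambda_{\max}(Q)$. Hence $\Prob\{\tlambdamin(Q)\le -(1+\epsilon)\}=\Prob\{\tlambda_{\max}(Q)\ge 1+\epsilon\}$ and $\Prob\{\tlambdamin(Q)\ge -(1-\epsilon)\}=\Prob\{\tlambda_{\max}(Q)\le 1-\epsilon\}$. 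Substituting the two bounds from the previous paragraph, and setting $C:=\max\{C_0,\,1/c\}$ so that a single constant controls both the prefactor and the rate, yields exactly the two inequalities asserted in the Proposition.

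I do not anticipate any genuine obstacle: all the analytic content lives in the cited theorem of \cite{MichelLedoux2010}, and what remains is the bookkeeping of the $\sqrt 2$ discrepancy between the two $\mathrm{GOE}$ conventions, the reflection $\lambda_{\min}\leftrightarrow-\lambda_{\max}$, and the consolidation of constants. The only point that deserves an explicit line is checking that the range $0<\epsilon<1$ in our statement is contained in the range for which the estimates of \cite{MichelLedoux2010} are valid, so that no extension of the parameter range is needed.
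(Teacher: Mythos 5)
Your argument is correct and is exactly what the paper does: the statement is \cite[Theorem 1]{MichelLedoux2010}, transferred to $\lambda_{\min}$ via the symmetry of the $\mathrm{GOE}(n)$ distribution under $Q\mapsto -Q$ and adjusted for the $\sqrt{2}$ discrepancy in the normalization of the ensemble. No further comment is needed.
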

\cref{prop_tail_bounds_GUE} shows that for large $n$ the mass of $\tlambdamin(Q)$ concentrates exponentially around~$-1$. Thus $\mean \tlambdamin(Q)$ converges to $-1$ as the following proposition shows.
\begin{prop}\label{prop_deviation_from_the_mean} For some constant $C>0$ and all $n\geq 1$ we have
$$\lvert \mean \tlambdamin(Q)+1 \rvert \leq\mean \lvert\tlambdamin(Q) +1\rvert \leq C n^{-2/3}.$$
\end{prop}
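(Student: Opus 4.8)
The first inequality $\lvert\mean\tlambdamin(Q)+1\rvert\le\mean\lvert\tlambdamin(Q)+1\rvert$ is Jensen's inequality for the convex function $x\mapsto\lvert x\rvert$ (equivalently, the triangle inequality for the integral), so the whole content is the estimate $\mean\lvert\tlambdamin(Q)+1\rvert\le Cn^{-2/3}$. The plan is to split $\tlambdamin(Q)+1$ into its positive and negative parts, to estimate each by a layer--cake (Fubini) integral of the corresponding one--sided deviation probability, and to feed in \cref{prop_tail_bounds_GUE} on the range $0<\epsilon<1$ where it applies, disposing of the two leftover far--tail events by applying the proposition at a fixed $\epsilon$ (say $\epsilon=\tfrac12$) together with a crude second moment bound.

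For the positive part I would write $\mean(\tlambdamin(Q)+1)^{+}=\mean\big[(\tlambdamin(Q)+1)\mathbf 1_{\{-1<\tlambdamin(Q)\le 0\}}\big]+\mean\big[(\tlambdamin(Q)+1)\mathbf 1_{\{\tlambdamin(Q)>0\}}\big]$. On the first event $\tlambdamin(Q)+1$ takes values in $(0,1]$, so by Fubini
$$\mean\big[(\tlambdamin(Q)+1)\mathbf 1_{\{-1<\tlambdamin(Q)\le 0\}}\big]=\int_0^1\Prob\{t-1<\tlambdamin(Q)\le 0\}\,\d t\le\int_0^1\Prob\{\tlambdamin(Q)\ge -(1-t)\}\,\d t,$$
and the second estimate of \cref{prop_tail_bounds_GUE} with $\epsilon=t$ bounds this by $\int_0^1 Ce^{-C^{-1}n^2t^3}\,\d t$, which after the substitution $t=sn^{-2/3}$ becomes $n^{-2/3}$ times a convergent integral, hence is $O(n^{-2/3})$. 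On the second event one has $0<\tlambdamin(Q)\le\Vert Q\Vert_F/\sqrt{2n}$, so $\tlambdamin(Q)+1\le 1+\Vert Q\Vert_F/\sqrt{2n}$; then Cauchy--Schwarz, the identity $\mean\Vert Q\Vert_F^2=N$, and $\Prob\{\tlambdamin(Q)>0\}\le\Prob\{\tlambdamin(Q)\ge-\tfrac12\}\le Ce^{-C^{-1}n^2/8}$ (the second estimate of \cref{prop_tail_bounds_GUE} with $\epsilon=\tfrac12$) make this contribution exponentially small in $n$. Hence $\mean(\tlambdamin(Q)+1)^{+}=O(n^{-2/3})$.

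I would treat the negative part symmetrically, splitting at $\tlambdamin(Q)=-2$. On $\{-2\le\tlambdamin(Q)<-1\}$ the quantity $-\tlambdamin(Q)-1$ lies in $(0,1]$ and the same Fubini argument gives
$$\mean\big[(-\tlambdamin(Q)-1)\mathbf 1_{\{-2\le\tlambdamin(Q)<-1\}}\big]=\int_0^1\Prob\{-2\le\tlambdamin(Q)<-(1+t)\}\,\d t\le\int_0^1 Ce^{-C^{-1}nt^{3/2}}\,\d t$$
by the first estimate of \cref{prop_tail_bounds_GUE} with $\epsilon=t\in(0,1)$, and the substitution $t=sn^{-2/3}$ again yields $O(n^{-2/3})$. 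On $\{\tlambdamin(Q)<-2\}$ one has $\lvert\tlambdamin(Q)\rvert\le\Vert Q\Vert_F/\sqrt{2n}$, while $\{\tlambdamin(Q)<-2\}\subseteq\{\tlambdamin(Q)\le-(1+\tfrac12)\}$ gives $\Prob\{\tlambdamin(Q)<-2\}\le Ce^{-C^{-1}n(1/2)^{3/2}}$, so Cauchy--Schwarz and $\mean\Vert Q\Vert_F^2=N$ again make this term exponentially small. Adding the two parts gives $\mean\lvert\tlambdamin(Q)+1\rvert=O(n^{-2/3})$; since this expectation is finite for every $n$, the finitely many small values of $n$ not covered by the asymptotic bound are absorbed into the constant, which proves the inequality for all $n\ge1$.

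The one delicate point, which I expect to be the main obstacle, is precisely the two far--tail events $\{\tlambdamin(Q)>0\}$ (i.e.\ $Q$ positive definite) and $\{\tlambdamin(Q)\ll-1\}$: there $\lvert\tlambdamin(Q)+1\rvert$ is unbounded and \cref{prop_tail_bounds_GUE} does not apply directly because it requires $\epsilon<1$. The fix I have in mind is to invoke the proposition only at an $\epsilon$ bounded away from $0$ and $1$ — which already yields probability $\le Ce^{-cn}$ (indeed $\le Ce^{-cn^2}$ for the positive--definite event) — and to control the size of $\lvert\tlambdamin(Q)+1\rvert$ on these events by the uniform bound $\mean\lvert\tlambdamin(Q)\rvert^2\le\mean\Vert Q\Vert_F^2/(2n)=N/(2n)=O(n)$ via Cauchy--Schwarz. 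One could instead quote the standard estimate $\mean\Vert Q\Vert_{\mathrm{op}}=O(\sqrt n)$ together with the Gaussian concentration of $Q\mapsto\Vert Q\Vert_{\mathrm{op}}$, but the second moment route keeps the argument self--contained modulo \cref{prop_tail_bounds_GUE}.
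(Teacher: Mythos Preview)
Your argument is correct. The layer--cake decomposition together with the two tail bounds from \cref{prop_tail_bounds_GUE} does the job on $\epsilon\in(0,1)$, and the Cauchy--Schwarz/second--moment treatment of the far tails $\{\tlambdamin>0\}$ and $\{\tlambdamin<-2\}$ (where $\epsilon$ would exit the admissible range) is clean; the computations $\int_0^1 e^{-c n^2 t^3}\,\d t=O(n^{-2/3})$ and $\int_0^1 e^{-c n t^{3/2}}\,\d t=O(n^{-2/3})$ via the substitution $t=sn^{-2/3}$ are exactly right, and $\mean\Vert Q\Vert_F^2=N$ gives the polynomial control you need against the exponentially small probabilities.

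The paper takes a much shorter route: it simply quotes from \cite{MichelLedoux2010} the moment bound $\limsup_{n\to\infty} 2^{p}n^{2p/3}\,\mean\lvert\tlambdamin(Q)+1\rvert^{p}<\infty$ (stated there right after their Corollary~3) and specializes to $p=1$; the first inequality is then the same Jensen/triangle step you give. What you have done is effectively re-derive the $p=1$ case of that moment estimate from the tail bounds of \cref{prop_tail_bounds_GUE}, which is presumably how \cite{MichelLedoux2010} obtain it as well. Your approach is longer but more self-contained within the paper (it uses only the already-stated \cref{prop_tail_bounds_GUE} rather than a second citation), and it makes transparent where the exponent $2/3$ comes from. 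The paper's version has the advantage of brevity and of giving all moments at once, should one need them.
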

\begin{proof}
By \cite[Equation after Corollary 3]{MichelLedoux2010} we have
\begin{equation}\label{prop_deviation_from_the_mean_eq}\limsup\limits_{n\to \infty} 2^p n^{2p/3} \mean\lvert \tlambdamin(Q) +1\rvert^p <\infty.\end{equation}
The assertion follows from monotonicity of the integral:
$\lvert \mean \tlambdamin(Q) +1\rvert  \leq \mean\lvert \tlambdamin(Q) +1\rvert$
and~\eqref{prop_deviation_from_the_mean_eq} with $p=1$.
\end{proof}
\begin{rem}
The distribution of the scaled largest eigenvalue of a $\mathrm{GOE}(n)$ matrix for $n\to \infty$ is known as the \emph{Tracy-Widom distribution} \cite{C.A.Tracy1996}. Suprisingly, this distribution appears in branches of probability that at first sight seem unrelated. For instance, the length of the longest increasing subsequence in a permutation that is chosen uniformly at random in the limit follows the Tracy-Widom distribution \cite{J.Baik1999}. In the survey article \cite{2002math.ph..10034T} Tracy and Widom give an overview of appearances of the distribution in growth processes, random tilings, statistics, queuing theory and superconductors. The present article adds spectrahedra to that list.
\end{rem}
\section{Expected volume of the spectrahedron}\label{sec:proof1}
In this section we prove Theorem \ref{main1}.
\subsection{Proof of Theorem \ref{main1} (1)}
Note that due to the rotational invariance of the standard Gaussian distribution $N(0,1)$ the volume of a spectrahedron $\sS_{\ell,n}$ can be computed as follows:
	\begin{equation}\label{identification}|\sS_{\ell,n}|_{\textrm{rel}}=\frac{\lvert \sS_{\ell,n} \rvert}{\lvert S^\ell\rvert} = \Prob\limits_{\xi_0,\ldots,\xi_\ell \stackrel{\mathrm{iid}}{\sim} N(0,1)} \left\{\xi_0\,\mathbbm{1}+\frac{1}{\sqrt{2n\ell}}\sum_{i=1}^\ell \xi_iQ_i\in \cP_n\right\}
	\end{equation}
Using this and the notations $Q(x)=x_1Q_1 + \cdots + x_\ell Q_\ell$ and $A(x)=x_0 \mathbbm{1} + Q(x)$ from \cref{Q_and_A}
we now write the expectation $\mean |\sS_{\ell,n}|_{\textrm{rel}}$ of the relative volume of the random spectrahedron as:
\begin{equation*} 		\mean \lvert \sS_{\ell,n}\rvert_\mathrm{rel} =\mean\limits_{\cQ\in \textrm{GOE}(n)^\ell}\Prob\limits_{\xi_0,\ldots,\xi_\ell \stackrel{\mathrm{iid}}{\sim} N(0,1)} \left\{\xi_0\,\mathbbm{1}+\frac{1}{\sqrt{2n\ell}}\sum_{i=1}^\ell \xi_iQ_i\in \cP_n\right\}=\mean_{\cQ}\;\mean_{\xi}\; \mathbf{1}_{\{A(\xi) \in \cP_n\}} =: (\star)
\end{equation*}
where $\mathbf{1}_{Y}$  denotes the characteristic function of the set $Y$. Using Tonelli's theorem the two integrations can be exchanged:
\begin{equation}\label{eq:step}(\star)= \mean_{\xi} \; \mean\limits_{\cQ}\; \mathbf{1}_{\set{A(\xi)\in \cP_n} }= \mean_{\xi} \; \Prob\limits_{\cQ\in\textrm{GOE}(n)^{\ell}}\; \set{\xi_0\,\mathbbm{1} + \frac{1}{\sqrt{2n\ell}}Q(\xi)\in \cP_n}.
\end{equation}
For a unit vector $x=(x_1,\dots,x_{\ell})\in S^{\ell-1}$ by the orthogonal invariance of the \textrm{GOE}-ensemble we have $Q(x) \sim \textrm{GOE}(n)$ which leads to
\begin{align*}
  (\star)&= \mean_{\xi}\;\Prob\limits_{Q\in \textrm{GOE}(n)}\; \set{\frac{\xi_0 }{(\xi_1^2+\cdots+\xi_\ell^2)^{1/2}}\,\mathbbm{1} + \frac{1}{\sqrt{2n\ell}}Q\in \cP_n} \\
&= \mean\limits_{Q\in \textrm{GOE}(n)}\; \Prob\limits_{\xi}\set{\frac{\xi_0 \sqrt{\ell}}{(\xi_1^2+\cdots+\xi_\ell^2)^{1/2}}\,\mathbbm{1} + \frac{1}{\sqrt{2n}}Q\in \cP_n},
\end{align*}
where in the second equality we again used Tonelli's theorem. Let us put
	$t_\ell:= \tfrac{\xi_0 \sqrt{\ell}}{(\xi_1^2+\cdots+\xi_\ell^2)^{1/2}}.$
Note that by \cite[(28.1)]{NormanLJohnson1995} the random variable $t_\ell$ follows the Student's t-distribution with $\ell$ degrees of freedom. Since this distribution is symmetric around the origin and $t_{\ell}\mathbbm{1} + \frac{1}{\sqrt{2n}} Q\in \cP_n$ is equivalent to $-t_{\ell}\leq \frac{1}{\sqrt{2n}}\lambdamin(Q)$, we have
$$ (\star) = \mean_{Q\in \textrm{GOE}(n)}\;\Prob\limits_{t_\ell}\set{t_\ell\leq \frac{1}{\sqrt{2n}}\lambdamin(Q)}=\mean_{Q\in \textrm{GOE}(n)}\;F_\ell\big(\tlambdamin(Q)\big),$$
where $F_\ell$ is the cumulative distribution function of the random variable $t_\ell$. This proves Theorem~\ref{main1} (1) since $(\star)=\mean|\sS_{\ell,n}|_{\textrm{rel}}$.\qed
\subsection{Proof of Theorem \ref{main1} (2)}
The random variable $t_{\ell}$ is absolutely continuous. Moreover, its density $F^{\prime}_{\ell}$ is continuous and bounded uniformly in $\ell$ \cite[(28.2)]{NormanLJohnson1995}. In \cref{important_lemma} below we show the (uniform in $\ell$) asymptotic
$\mean \lvert \Sigma_{\ell,n}\rvert_\mathrm{rel} = F_\ell(-1)+\cO(n^{-2/3}),$
where $F_\ell(x)=\Prob\set{t_\ell\leq x}$ and the random variable $t_\ell$ follows the Student's t-distribution with $\ell$ degrees of freedom. By \mbox{\cite[(28.15)]{NormanLJohnson1995}} for fixed $x$ we have
	$F_\ell(x) = \Phi(x) (1+\cO(\ell^{-1})),$
where $\Phi$ is the cumulative distribution function of the standard normal distribution. Plugging in $x=-1$ proves \cref{main1} $(2)$.\qed

\begin{lemma}\label{important_lemma}
Let $f:\HR\to\HR$ be a smooth function such that there exists a constant $c>0$ with $\lVert f'\rVert_\infty \leq c$. Then $\mean_{Q\in \mathrm{GOE}(n)} f\big(\tlambdamin(Q)) = f(-1)+\cO(n^{-2/3})$, and the constant in $\cO(n^{-2/3})$ only depends on $c$ (but not on $f$).
\end{lemma}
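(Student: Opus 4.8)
The statement is essentially a soft consequence of \cref{prop_deviation_from_the_mean}: the only new ingredient needed is that a uniformly Lipschitz function cannot spread out the concentration of $\tlambdamin(Q)$ around $-1$ by more than a constant factor. So the plan is to pass from the bound on $\mean|\tlambdamin(Q)+1|$ to a bound on $\mean|f(\tlambdamin(Q))-f(-1)|$ using the mean value theorem, and then invoke monotonicity of the integral.

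\textbf{Key steps.} First I would record the pointwise inequality: for every symmetric matrix $Q$, the mean value theorem applied to $f$ on the interval with endpoints $\tlambdamin(Q)$ and $-1$ gives
$$\bigl| f\big(\tlambdamin(Q)\big) - f(-1)\bigr| \;\leq\; \lVert f'\rVert_\infty \cdot \bigl| \tlambdamin(Q)+1\bigr| \;\leq\; c\,\bigl| \tlambdamin(Q)+1\bigr|.$$
Second, I would take expectations over $Q\sim \mathrm{GOE}(n)$ and use that $|\mean X|\leq \mean|X|$ together with monotonicity:
$$\Bigl| \mean_{Q\in \mathrm{GOE}(n)} f\big(\tlambdamin(Q)\big) - f(-1)\Bigr| \;\leq\; \mean_{Q}\,\bigl| f\big(\tlambdamin(Q)\big) - f(-1)\bigr| \;\leq\; c\,\mean_{Q}\,\bigl| \tlambdamin(Q)+1\bigr|.$$
Third, I would apply \cref{prop_deviation_from_the_mean}, which gives a constant $C>0$ (independent of $n$) with $\mean_Q|\tlambdamin(Q)+1|\leq C n^{-2/3}$ for all $n\geq 1$. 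Combining the three steps yields
$$\Bigl| \mean_{Q\in \mathrm{GOE}(n)} f\big(\tlambdamin(Q)\big) - f(-1)\Bigr| \;\leq\; cC\, n^{-2/3},$$
which is exactly the claimed asymptotic with implied constant $cC$, depending only on $c$ (since $C$ is absolute). This proves the lemma.

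\textbf{Main obstacle.} There is essentially no obstacle: the content is entirely carried by \cref{prop_deviation_from_the_mean}, and the present lemma is the routine ``Lipschitz test function'' upgrade. The only point worth a moment's care is the bookkeeping on the constant — making sure the final implied constant is a fixed multiple of $c$ and does not secretly depend on $f$ through anything other than $\lVert f'\rVert_\infty$ — but this is immediate from the chain of inequalities above, since $C$ comes from \cref{prop_deviation_from_the_mean} and is independent of both $f$ and $n$. (One should also note that $f(-1)$ and $f(\tlambdamin(Q))$ are integrable: this follows since $|f(\tlambdamin(Q))|\leq |f(-1)|+c|\tlambdamin(Q)+1|$ and $\tlambdamin(Q)$ has finite first moment, again by \cref{prop_deviation_from_the_mean}.)
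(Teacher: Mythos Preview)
Your proof is correct and follows essentially the same approach as the paper: both arguments use the mean value theorem to bound $|f(\tlambdamin(Q))-f(-1)|$ by $c\,|\tlambdamin(Q)+1|$ and then invoke \cref{prop_deviation_from_the_mean}. Your write-up is in fact slightly cleaner, since you obtain the two-sided estimate at once and explicitly note the integrability point.
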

\vspace{-0.5cm}
\begin{proof}
Using the intermediate value theorem we write $f(\tlambdamin)=f(-1)+f'(x)(\tlambdamin+1)$ for some $x=x(\tlambdamin)\in\HR$. Taking expectation we obtain
$$\mean f (\tlambdamin)=f(-1)+  \mean\, f'(x)(\tlambdamin+1)\leq f(-1)+  \mean\, \vert f'(x)\vert \,\vert\tlambdamin+1\vert,$$
where the last step is the triangle inequality. This implies, using $\vert f'(x)\vert<c$, that
$$\mean f (\tlambdamin) \leq f(-1) + c\mean|\tlambdamin+1|=f(-1)+\mathcal{O}(n^{-2/3})$$
where the last equality follows from \cref{prop_deviation_from_the_mean} (and is independent of $f$).
\end{proof}

\section{Expected volume of the boundary of the spectrahedron}\label{sec:main2}
In this section we prove Theorem \ref{main2}.
\subsection{Proof of Theorem \ref{main2} (1)}We use the Kac-Rice formula for volume of random manifolds \cite[Theorem 12.1.1]{adler}.
Let $Q(x)=\sum_{i=1}^\ell x_iQ_i$ and $A(x)=x_0\mathbbm{1}+\tfrac{1}{\sqrt{2n\ell}}Q(x)$ be as in \cref{Q_and_A} and denote by $\mu_1(x)\leq \ldots\leq \mu_n(x)$ the ordered eigenvalues of $Q(x)$.  We write the eigenvalues of $A(x)$ as $\alpha_{j}(x)=x_0+\tfrac{1}{\sqrt{2n\ell}}\mu_j(x)$. For a fixed $x$, the density of the random number $\alpha_1(x)$ is denoted $p_{\alpha_1(x)}$. Later we will also need $\lambda_1\leq \cdots\leq \lambda_n$, the eigenvalues of the first matrix $Q_1$.

The set of smooth points of $\partial \sS_{\ell, n}$ is described as follows:
\be (\partial \sS_{\ell, n})_{\textrm{sm}}=\{x\in S^\ell \mid \alpha_1(x)=0, \alpha_1(x)\neq \alpha_2(x)\}\ee
In the following we omit `$x\in S^\ell$' in the notation of sets. By continuity of Lebesgue measure, we have $|\partial \sS_{\ell, n}|=\lim_{\epsilon\to 0}|\{\alpha_1=0\}\cap\{|\alpha_1-\alpha_2|\geq \epsilon\}|$
and, consequently, taking expectation over $\cQ\sim \mathrm{GOE}(n)^\ell$ we have
\begin{align} \mean |\partial \sS_{\ell, n}|&=\mean \lim_{\epsilon\to 0}|\{\alpha_1=0\}\cap\{|\alpha_1-\alpha_2|> \epsilon\}|=\lim_{\epsilon\to 0} \mean |\{\alpha_1=0\}\cap\{|\alpha_1-\alpha_2|> \epsilon\}|;\end{align}
for the last equality we have used monotone convergence to exchange the limit with the expectation.
For a fixed $\epsilon$ the function $\alpha_1$ is smooth on the set $\{|\alpha_1-\alpha_2|>\epsilon\}$ and we can use the Kac-Rice formula\footnote{Here we are applying a generalization of \cite[Theorem 12.1.1]{adler} with the choice $M=S^n$, $f=\alpha_1:S^\ell\to \R$, $h=\alpha_1-\alpha_2:S^\ell\to \R$ (two random fields) and $B=(-\infty, \epsilon)\cup(\epsilon, \infty)\subset \R$. The higher generality comes from the fact that $f^{-1}(0)$ has codimension one in $S^n$; in this, and in the more general case when  $f:M\to \R^k$ with $\dim (M)\geq k$ (the codimension-$k$ case), we have to modify the statement of \cite[Theorem 12.1.1]{adler} as follows. Under the assumption that ``$0$'' is a regular value of the random map $f$ on $h^{-1}(B)$ with probability one, the expectation of the geometric $(\dim(M)-k)$-dimensional volume of $f^{-1}(0)\cap h^{-1}(B)$ equals:
$$\mean|f^{-1}(0)\cap h^{-1}(B)|=\int_{M}\mean\left\{|\mathrm{NJ}f(x)|\cdot \mathbf{1}_{\{h\in B\}}(x)\,\big|\, f(x)=0\right\}\cdot p_{f(x)}(0)\, \d x,$$
where $\mathrm{NJ}f(x)$ denotes the Normal Jacobian of $f$ at $x$; in our case the Normal Jacobian equals the norm of the spherical gradient, i.e. the gradient with respect to an orthonormal basis of $T_{x}{S^\ell}$. The proof of this generalization is essentially identical to the proof of \cite[Theorem 12.1.1]{adler}, with some obvious modifications. A proof of this statement in the Gaussian case, without the constraint ``$h^{-1}(B)$'', can be found in \cite[Theorem 6.10]{azais}.}  \cite[(12.1.4)]{adler} to obtain
\begin{align} \label{eq:step1}\mean |\partial \sS_{\ell, n}|
 =\lim_{\epsilon\to 0}\int_{S^{\ell}}\mean\left\{|\nabla_{S^\ell}\alpha_1(x)|\cdot \mathbf{1}_{\{|\alpha_1( x)-\alpha_2( x)|> \epsilon\}}\,\big|\, \alpha_1(x)=0\right\}\cdot p_{\alpha_1(x)}(0)\, \d x.
\end{align}
Here $\nabla_{S^\ell}$ denotes the gradient with respect to an orthogonal basis of $T_{y}{S^\ell}$, also called the \emph{spherical gradient}, and $p_{\alpha_1(x)}(0)$ is the value of the density of $\alpha_1(x)$ at zero (for fixed $x$).

For a vector $x = (x_0,x_1,\ldots,x_\ell)\in S^{\ell}$ let us denote $\widehat{x}:= (x_1,\ldots,x_\ell)$. Due to the law of adding i.i.d.\ Gaussians we have $\Vert \widehat{x}\Vert^{-1} Q(x)=\Vert \widehat{x}\Vert^{-1}\sum_{i=1}^\ell x_i Q_i \sim \mathrm{GOE}(n)$. Therefore, the probability distribution of $A(x) = x_0 \mathbbm{1} + \tfrac{1}{\sqrt{2n\ell}} Q(x)$
is invariant under rotations preserving the axis through the point $(1,0,\ldots,0)\in S^\ell$. Consequently, the integrand in~\cref{eq:step1} only depends on $y=(x_0, \sqrt{1-x_0^2}, 0,\ldots, 0)$. Moreover, the uniform distribution on $S^\ell$ induces the uniform distribution on the first entry $x_0$. Hence,
\begin{align*}
\mean |\partial \sS_{\ell, n}|=\lim_{\epsilon\to 0}\int_{x_0\in[-1,1]}\mean\left\{|\nabla_{S^\ell}\alpha_1(y)|\cdot \mathbf{1}_{\{|\alpha_1( y)-\alpha_2( y)|>\epsilon\}}\,\big|\, \alpha_1(y)=0\right\} p_{\alpha_1(y)}(0)\,\d x=:(\star)
\end{align*}
Before continuing with the evaluation of $(\star)$ we examine the integrand and the random variables therein. Recall that $\alpha_j(y)$ is the $j$-the eigenvalue of $A(y)$, i.e.,
$ \alpha_1(y)=x_0+\frac{1}{\sqrt{2n\ell}}\,\mu_1(y).$
Note
\begin{equation} \label{alpha_identity}\alpha_1(y)=0\; \text{ if and only if }\;\mu_1(y)=-x_0\sqrt{2n\ell}.
\end{equation}
Let us denote by $u(x)$ the normalized eigenvector of $Q(x)$ associated to the eigenvalue $\mu_1(x)$. The spherical gradient $\nabla_{S^\ell}\alpha_1(x)$ is the projection of the (ordinary) gradient $\nabla_{S^\ell}\alpha_1(x)$ onto $T_{x}{S^\ell}$:
\begin{equation}\label{gradients_relation}\nabla_{S^\ell}\alpha_1(x)=\nabla_{\HR^{\ell+1}}\alpha_1(x)-\langle\nabla_{\HR^{\ell+1}}\alpha_1(x), x\rangle x.
\end{equation}
Using Hadamard's first variation \cite[Section 1.3]{tao_matrix} we can write
\be \nabla_{\HR^{\ell+1}}\alpha_1(x)=\left(1, \frac{1}{\sqrt{2n\ell}}u(x)^TQ_1u(x), \ldots,\frac{1}{\sqrt{2n\ell}}u(x)^TQ_\ell u(x) \right).\ee
Since the $Q_1, \ldots, Q_\ell$ are all symmetric, there is an orthogonal change of basis that makes $Q_1$ diagonal. Then, $Q(y)=\sqrt{1-x_0^2}\,Q_1$ is also diagonal and we can assume that the eigenvector corresponding to $\mu_1(y)$ is $u(y)=(1, 0, \ldots, 0)$. Recall that we have denoted the smallest eigenvalue of $Q_1$ by $\lambda_1$ and note that $\sqrt{1-x_0^2}\,\lambda_1=\mu_1(y)$. Consequently, $u(y)^T Q_1u(y)= \lambda_1$ and therefore we can assume that $\nabla_{\HR^{\ell+1}}\alpha_1(y)$ has the form
\be  \nabla_{\HR^{\ell+1}}\alpha_1(y)=\left(1, \frac{\lambda_1}{\sqrt{2n\ell}},\frac{\xi_2}{\sqrt{2n\ell}}, \ldots,\frac{\xi_\ell}{\sqrt{2n\ell}} \right)\ee
where $\xi_k=(Q_k)_{11}$, $k=2, \ldots, \ell$, are standard independent gaussian variables (recall that in our definition the diagonal entries of a $\mathrm{GOE}(n)$-matrix are standard normal variables). In particular, by \cref{gradients_relation},
\begin{equation}\label{hallo}|\nabla_{S^\ell}\alpha_1(y)|^2
=\vert\nabla_{\HR^{\ell+1}}\alpha_1(y)\vert^2-\langle\nabla_{\HR^{\ell+1}}\alpha_1(y), y\rangle^2
=1+\frac{\lambda_1^{2}}{2n\ell}+\frac{\chi_{\ell-1}^2}{2n\ell}-\Big(x_0+\frac{\mu_1(y)}{\sqrt{2n\ell}}\Big)^2,
\end{equation}
where $\chi_{\ell-1}^2=\sum_{i=2}^\ell\xi_i^2$ is a chi-squared distributed random variable with $\ell-1$ degrees of freedom. Observe that the inner expectation in $(\star)$ is conditioned on the event $\alpha_1(y)=0$. Given this equality we have that $x_0+\tfrac{1}{\sqrt{2n\ell}}\mu_1(y) = 0$. This deletes the last summand in \cref{hallo}. Furthermore, we rewrite the right-hand side of \cref{alpha_identity} as
 \begin{equation}\label{eq_lambda}\frac{\lambda_1}{\sqrt{2n}} = - \frac{x_0\sqrt{\ell}}{\sqrt{1-x_0^2}} .\end{equation}
Recall that $\tlambda=\lambda /\sqrt{2n}$ denotes the scaled eigenvalue and define now the function
\be \label{def_h}h(\tlambda,w)=\sqrt{
1+\frac{\tlambda^{2}}{\ell}+\frac{w}{2n\ell}}.\ee
Denote by $p_{(\tlambda_1,\tlambda_2)}$ the conditional density of $(\tlambda_1,\tlambda_2)$ on $\alpha_1(y)=\alpha_1(x_0,\sqrt{1-x_0^2},0,\ldots,0)=0$ and by $p_{\tlambda_1}$ the marginal density of $\tlambda_1$.
The expectation inside $(\star)$ is with respect to $(\alpha_1(y), \alpha_2(y))$, and we have $\alpha_1(y) = x_0 +\sqrt{\tfrac{1-x_0^2}{\ell}}\,\tlambda_1$. Instead of integrating over $(\alpha_1, \alpha_2)$, we integrate over $(\tlambda_1,\tlambda_2)$ and, using \eqref{def_h}, the expectation becomes:
\be\label{eq:ecco}\mean\left\{|\nabla_{S^\ell}\alpha_1(y)|\cdot \mathbf{1}_{\{|\alpha_1( y)-\alpha_2( y)|>\epsilon\}}\,\big|\, \alpha_1(y)=0\right\}=\mean\limits_{\tlambda_1,w}  \left[h(\tlambda_1,w)\,\mathbf{1}_{\cB(\epsilon)} \mid \tlambda_1= \tfrac{-x_0\sqrt{\ell}}{\sqrt{1-x_0^2}}\right]\ee
where $w\sim \chi^{2}_{\ell-1}$ and $\cB(\epsilon)= \{ \sqrt{\frac{1-x_0^2}{\ell}}\,|\tlambda_1-\tlambda_2|> \epsilon \}$.

We now go back to our integral. First observe that the densities of the eigenvalues at zero are related by:
\be p_{\alpha_1(y)}(0)=\sqrt{ \tfrac{\ell}{1-x_0^2}}\,p_{\tlambda_1}\left(\tfrac{-x_0\sqrt{\ell}}{\sqrt{1-x_0^2}}\right).\ee Using this and \eqref{eq:ecco}, we get
\begin{align*}
 (\star)
 &=\lim\limits_{\epsilon\to 0}\int_{x_0\in[-1,1]}\,\mean\limits_{\tlambda_1,w}  \left[h(\tlambda_1,w)\,\mathbf{1}_{\cB(\epsilon)} \mid \tlambda_1= \tfrac{-x_0\sqrt{\ell}}{\sqrt{1-x_0^2}}\right]\,
\sqrt{ \frac{\ell}{1-x_0^2}}
 \,p_{\tlambda_1}\left(\tfrac{-x_0\sqrt{\ell}}{\sqrt{1-x_0^2}}\right)\,\d x_0.
\end{align*} Using the monotone convergence theorem we can perform the limit within the expectation. Thereafter, $\lambda_2$ does not appear in the variable we take the expected value from. We may omit it to get
\begin{align*}
(\star)
&=\int_{x_0\in[-1,1]}\mean\limits_{\tlambda_1,w} \left[h(\tlambda_1,w)\mid \tlambda_1= \tfrac{-x_0\sqrt{\ell}}{\sqrt{1-x_0^2}}\right]\,\sqrt{ \frac{\ell}{1-x_0^2}}\,p_{\tlambda_1}\left(\tfrac{-x_0\sqrt{\ell}}{\sqrt{1-x_0^2}}\right)\,\d x_0\\
&=\int_{x_0\in[-1,1]}\mean\limits_{\tlambda_1,w} \left[h\left( \tfrac{-x_0\sqrt{\ell}}{\sqrt{1-x_0^2}},w)\right) \right]\,\sqrt{ \frac{\ell}{1-x_0^2}}\,p_{\tlambda_1}\left(\tfrac{-x_0\sqrt{\ell}}{\sqrt{1-x_0^2}}\right)\,\d x_0
\end{align*}
where in the last step we have used the independence of $\tilde{\lambda}_1$ and $w$.
Let us denote by $p_{\chi^{2}_{\ell-1}}(w)$ the density of the $\chi^{2}_{\ell-1}$-random variable. Then we can write
\begin{align*}(\star)&=\int_{x_0\in [-1,1]}\int_{w>0} \sqrt{ \frac{\ell}{1-x_0^2}}\;h\left( \tfrac{-x_0\sqrt{\ell}}{\sqrt{1-x_0^2}},w\right) \;p_{\tlambda_1}\left(\tfrac{-x_0\sqrt{\ell}}{\sqrt{1-x_0^2}}\right) p_{\chi^{2}_{\ell-1}}(w)\, \d w \d x_0.\end{align*}
We use the formula \cref{eq_lambda} to make the change of variables $x_0\mapsto \tlambda_1$ (this trick allows to perform the $x_0$-integration over the smallest rescaled eigenvalue of a GOE$(n)$ matrix). For the differentials we get $\d x_0 = -\tfrac{\ell}{(\ell+\tlambda_1^2)^\frac{3}{2}}\,\d \tlambda_1$ and, writing $\tlambdamin(Q)$ instead of $\tlambda_1$ we obtain:
\begin{align*}(\star)&=\int_\R\left(\mean\limits_{w\sim \chi^2_{\ell-1}}\,\frac{\ell}{\ell+\tilde{\lambda}_1} \,h(\tilde{\lambda}_1,w)\right)p_{\tlambda_1}(\tilde{\lambda}_1)\d\tilde{\lambda}_1\\&= \mean\limits_{Q\sim \mathrm{GOE}(n)}\mean\limits_{w\sim \chi^2_{\ell-1}}\,\frac{\ell}{\ell+\tlambdamin(Q)^2} \,h(\tlambdamin(Q),w)\\
 &= \mean\limits_{Q\sim \mathrm{GOE}(n)}\Bigg[\underbrace{\frac{\ell}{\ell+\tlambdamin(Q)^2} \,\mean\limits_{w\sim \chi^2_{\ell-1}}\,\sqrt{1+\frac{\tlambdamin(Q)}{\ell} + \frac{w}{2n\ell}}}_{=f_{\ell,n}(\tlambdamin(Q))}\Bigg].
\end{align*}
Because $(\star)=\mean \vert \partial \sS_{\ell, n}\vert_\mathrm{rel}$, the assertion follows.\qed
\subsection{Proof of Theorem \ref{main2} (2)}
The overall idea to prove Theorem \ref{main2} (1) is to apply \cref{important_lemma} to the function
  $$ f_{\ell,n}(x) = \frac{\ell}{\ell+x^2}\,\mean\limits_{w\sim \chi^2_{\ell-1}}\,\sqrt{1+\frac{x^2}{\ell}+\frac{w}{2n\ell}}.$$
The derivative of $f_{\ell,n}(x)$ is
	    \begin{equation} \label{bound_deriv}f'_{\ell,n}(x)= \frac{-2x\ell}{(\ell+x^2)^2}\mean\limits_{w\sim \chi^2_{\ell-1}}\,\sqrt{1+\frac{x^2}{\ell}+\frac{w}{2n\ell}} \;+  \frac{\ell}{\ell+x^2}\,\mean\limits_{w\sim \chi^2_{\ell-1}}\,\frac{x}{\ell\sqrt{1+\frac{x^2}{\ell}+\frac{w}{2n\ell}}}
	    \end{equation}
Using the bound $ x\leq \sqrt{\ell}\sqrt{1+\frac{x^2}{\ell}+\frac{w}{2n\ell}}$ we see that the second term in \cref{bound_deriv} is bounded as $0\leq  \frac{\sqrt{\ell}}{\ell+x^2}\,\mean_{w\sim \chi^2_{\ell-1}}\,1 \leq 1$, so that
$g_{\ell,n}(x)  \leq f'_{\ell,n}(x)\leq  g_{\ell,n}(x)+  1$
where
$$g_{\ell,n}(x)=\frac{-2x\ell}{(\ell+x^2)^2}\mean\limits_{w\sim \chi^2_{\ell-1}}\,\sqrt{1+\frac{x^2}{\ell}+\frac{w}{2n\ell}},$$
 and in particular
\begin{equation}	\label{bound_deriv2}
	\vert f'_{\ell,n}(x)\vert \leq \vert g_{\ell,n}(x)\vert +1.
\end{equation}
Next, we bound $\vert g_{\ell,n}(x)\vert$.
Because $\sqrt{a+b}\leq \sqrt{a}+\sqrt{b}$ for non-negative $a$ and~$b$, we have
  $$\sqrt{1+\frac{x^2}{\ell}+\frac{w}{2n\ell}}\leq \sqrt{1+\frac{x^2}{\ell}}+\sqrt{\frac{w}{2n\ell}}.$$
Taking expectation (the expectation of $\sqrt{w}$ can be found, e.g., in \cite{NormanLJohnson1995}) we get
$$\mean\limits_{w\sim \chi^2_{\ell-1}}\,\sqrt{1+\frac{x^2}{\ell}+\frac{w}{n\ell}} \leq \sqrt{1+\frac{x^2}{\ell}}+\frac{1}{\sqrt{2n\ell}} \frac{\Gamma\left(\frac{\ell}{2}\right)}{\Gamma\left(\frac{\ell-1}{2}\right)}.$$
By \cite{Wendel} we have $\tfrac{\Gamma\left(\frac{\ell}{2}\right)}{\sqrt{\ell}\;\Gamma\left(\frac{\ell-1}{2}\right)}\leq 1$ for $\ell\geq 2$ and hence
\begin{equation}\label{boudn_expectation}
\mean\limits_{w\sim \chi^2_{\ell-1}}\,\sqrt{1+\frac{x^2}{\ell}+\frac{w}{2n\ell}} = \sqrt{1+\frac{x^2}{\ell}}+\cO(n^{-1/2}),
\end{equation}
where the constant in $\cO(n^{-1/2})$ does not depend on $\ell$. Thus,
	$$\vert g_{\ell,n}(x)\vert= \frac{2\vert x\vert\ell}{(\ell+x^2)^2}  \left(\sqrt{1+\frac{x^2}{\ell}}+\cO(n^{-1/2})\right),$$
which shows that $\vert g_{\ell,n}(x)\vert$ is bounded uniformly in $x, \ell$ and $n$, i.e., there exists a constant $c\in\HR$ with  $\sup_{\ell,n,x} \vert g_{\ell,n}(x)\vert < c$.
Then, by \cref{bound_deriv2}, $\sup_{\ell,n,x} \vert f_{\ell,n}'(x)\vert \leq c + 1$ and \cref{important_lemma} implies that
	  $$ \mean\limits_{Q\sim\mathrm{GOE}(n)}\, f_{\ell,n}(\tlambdamin(Q)) = f_{\ell,n}(-1) + \cO(n^{-2/3}),$$
where the constant in $\cO(n^{-1/2})$ is independent of $\ell$ and $n$.
Now the value of $f_{\ell,n}$ at $x=-1$ is
\begin{align*} f_{\ell,n}(-1) &= \frac{\ell}{\ell+1}\,\mean\limits_{w\sim \chi^2_{\ell-1}}\,\sqrt{1+\frac{1}{\ell}+\frac{w}{2n\ell}}\\[0.1em] &=\frac{\ell}{\ell+1}\Big(\sqrt{1+\frac{1}{\ell}}+\cO(n^{-1/2})\Big)\\[0.1em] &=\sqrt{\frac{\ell}{\ell+1}}+\cO(n^{-1/2}),
\end{align*}
where the second equality follows from \cref{boudn_expectation}. The assertion is implied by Theorem \ref{main2}(1) and the asymptotic $\sqrt{\tfrac{\ell}{\ell +1}}=1-\tfrac{1}{2\ell}+\cO(\ell^{-2})$
. \qed
\section{The average number of singular points}\label{sec:nodes}
For the study of the average number of singular points on the boundary of a random spectrahedron and on its symmetroid surface we will rely on the following proposition, which implies that this number is generically finite.
\begin{prop}\label{prop:strata}Let $\sS_{\ell, n}^{(k)}$ be the set of matrices of corank $k$ in the spectrahedron $\sS_{\ell, n}$ and~$\Sigma_{\ell, n}^{(k)}$ the set of matrices of corank $k$ in the symmetroid hypersurface $\Sigma_{\ell, n}$. For a generic choice of $\cR=(R_1, \ldots, R_\ell)\in \mathrm{Sym}(n, \R)^\ell$ the sets $\sS_{\ell, n}^{(k)},\Sigma_{\ell, n}^{(k)}\subset S^\ell$ are either empty or semialgebraic of codimension ${k +1\choose 2}$.
\end{prop}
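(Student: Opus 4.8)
The plan is to realize $\sS_{\ell,n}^{(k)}$ and $\Sigma_{\ell,n}^{(k)}$ as preimages of the determinantal strata of $\textrm{Sym}(n,\R)$ under the linear parametrization $x\mapsto A(x)$, and then to apply parametric transversality in the matrix parameter $\cR$. First I would recall the classical fact that
\[ \cM_k:=\{M\in\textrm{Sym}(n,\R)\ :\ \textrm{corank}(M)=k\} \]
is a smooth semialgebraic submanifold of $\textrm{Sym}(n,\R)$ of codimension $\binom{k+1}{2}$: working near a point $M_0\in\cM_k$, after an orthogonal change of coordinates that diagonalizes the invertible block of $M_0$ one writes $M=\left(\begin{smallmatrix}A&B\\B^{T}&D\end{smallmatrix}\right)$ with $A$ invertible, and $\textrm{corank}(M)=k$ is equivalent to the vanishing of the Schur complement $D-B^{T}A^{-1}B\in\textrm{Sym}(k,\R)$, a map which is a submersion onto $\textrm{Sym}(k,\R)$. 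From this local description I would also extract two facts for later use: the signature of $M$ is locally constant on $\cM_k$, and the positive semidefinite matrices in $\cM_k$ — exactly those of signature $(n-k,0)$ — form a union of connected components of $\cM_k$, hence an open subset of it.

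Next I would introduce the incidence map
\[ F\colon\bigl(S^\ell\setminus\{(\pm1,0,\dots,0)\}\bigr)\times\textrm{Sym}(n,\R)^\ell\longrightarrow\textrm{Sym}(n,\R),\qquad F(x,\cR)=x_0\,\mathbbm{1}+\sum_{i=1}^{\ell}x_iR_i. \]
The two excluded points of $S^\ell$ are sent to $\pm\mathbbm{1}$, which is invertible, so they belong to no $\sS_{\ell,n}^{(k)}$ or $\Sigma_{\ell,n}^{(k)}$ with $k\ge1$; the case $k=0$ is trivial, since $\binom{1}{2}=0$, $\Sigma_{\ell,n}^{(0)}=\emptyset$, and $\sS_{\ell,n}^{(0)}$ is open. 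For any remaining $x$ some coordinate $x_j$ with $j\ge1$ is nonzero, and then the partial differential of $F$ in the $R_j$ slot is $H\mapsto x_jH$, an isomorphism of $\textrm{Sym}(n,\R)$; hence $F$ is a submersion and is automatically transverse to each $\cM_k$, $k=1,\dots,n$.

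The rest is routine. By the parametric transversality theorem, for each $k$ the map $A(\cdot)=F(\cdot,\cR)$ is transverse to $\cM_k$ for every $\cR$ outside a measure-zero set $\cB_k\subset\textrm{Sym}(n,\R)^\ell$; since the incidence variety $F^{-1}(\cM_k)$ and its projection to $\textrm{Sym}(n,\R)^\ell$ are semialgebraic, the semialgebraic Sard theorem lets one take $\cB_k$ to be semialgebraic of positive codimension, so that ``generic $\cR$'' may be understood as $\cR\notin\bigcup_{k=1}^{n}\cB_k$. For such $\cR$ each $F(\cdot,\cR)^{-1}(\cM_k)$ is empty or a smooth submanifold of $S^\ell$ of codimension $\binom{k+1}{2}$. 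As $\Sigma_{\ell,n}^{(k)}=F(\cdot,\cR)^{-1}(\cM_k)$, this gives the claim for the symmetroid; and since $\sS_{\ell,n}^{(k)}$ is the set of $x\in\Sigma_{\ell,n}^{(k)}$ with $A(x)$ of signature $(n-k,0)$, which by the local constancy of the signature on the corank-$k$ locus is an \emph{open} subset of the manifold $\Sigma_{\ell,n}^{(k)}$, the same codimension bound holds for the spectrahedron. Semialgebraicity of both sets is immediate: they are defined by the vanishing of all $(n-k+1)\times(n-k+1)$ minors, the non-vanishing of some $(n-k)\times(n-k)$ minor, and, for $\sS_{\ell,n}^{(k)}$, the semialgebraic condition $A(x)\in\cP_n$.

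The only genuinely delicate point is the one in the last step: parametric transversality a priori only yields a full-measure set of good parameters $\cR$, and to match the meaning of ``generic'' in the present algebraic setting one must upgrade this to the complement of a proper semialgebraic subset, which is why the argument is phrased through the semialgebraic version of Sard's theorem applied to $F^{-1}(\cM_k)\to\textrm{Sym}(n,\R)^\ell$. The determinantal geometry of the first step and the continuity of the signature in the third step are classical and should pose no difficulty.
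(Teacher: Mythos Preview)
Your proof is correct and follows essentially the same route as the paper's own argument: both set up the map $F(x,\cR)=x_0\mathbbm{1}+\sum_i x_iR_i$, verify that it is transverse to each corank stratum by differentiating in the $R_i$ direction at points with some $x_i\neq 0$ (and handle the poles $\pm e_0$ separately, where $F$ lands in the open stratum), and then invoke parametric transversality to conclude. Your treatment is in fact more explicit on several points the paper leaves implicit --- the Schur-complement chart for $\cM_k$, the openness of the positive-signature locus used to pass from $\Sigma_{\ell,n}^{(k)}$ to $\sS_{\ell,n}^{(k)}$, and the semialgebraic upgrade of ``generic'' --- but these are elaborations of the same strategy rather than a different approach.
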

\begin{proof}In the space $\textrm{Sym}(n, \R)$ consider the semialgebraic stratification given by  the corank: $\label{eq:strata} \textrm{Sym}(n, \mathbb{R})=\coprod_{k=0}^{n}\cZ^{(k)},$
where $\cZ^{(k)}$ denotes the set matrices of corank $k$, and the induced stratification on the cone $\cP_n$ of positive semidefinite matrices $\cP_n= \coprod_{k=0}^{n}(\cZ^{(k)}\cap \cP_n)$.  These are Nash stratifications \cite[Proposition 9]{AgrachevLerario} and the codimensions of both $\cZ^{(k)}$ and $\cZ^{(k)}\cap \cP_n$ are equal to ${k+1\choose 2}$.

Consider now the semialgebraic map
\be F:S^\ell \times (\textrm{Sym}(n, \R))^{\ell}\to \textrm{Sym}(n, \R),\;(x, \cR)\mapsto x_0\mathbbm{1}+ x_1 R_1 + \cdots +x_\ell R_\ell.\ee
Then $\Sigma^{(k)}_{\ell, n}=\{x\in S^{\ell}\,|\, F(\cR, x)\in \cZ^{(k)}\}$ and $\sS_{\ell, n}^{(k)}=\{x\in S^{\ell}\,|\, F(\cR, x)\in \cZ^{(k)}\cap \cP_n\}$ and hence they are semialgebraic. We assume that both are non-empty.

We now prove that $F$ is transversal to all the strata of these stratifications. Then the parametric transversality theorem \cite[Chapter 3, Theorem 2.7]{Hirsch} will imply that for a generic choice of $\cR$ the set $\sS_{\ell, n}$ is stratified by the $\sS_{\ell, n}^{(k)}$ and the same for the set $\Sigma_{\ell, n}$. Moreover, when the preimage strata are nonempty, codimensions are preserved. To see that $F$ is transversal to all the strata of the stratifications we compute its differential. At points $(x, \cR)$ with $x\neq e_0 = (1, 0, \ldots, 0)$ we have $D_{(x, \cR)}F(0, \dot\cR)=x_1\dot R_1+\cdots +x_\ell \dot R_\ell$
and the equation $D_{(x, \cR)}F(\dot x, \dot\cQ)=P$ can be solved by taking $\dot x=0$ and $\dot \cR=(0, \ldots, 0, x_i^{-1}P, 0, \ldots, 0)$
where $ x_i^{-1}P$ is in the $i$-th entry and $i$ is such that $x_i\neq 0$ (in other words, already variations in $\cR$ ensure surjectivity of $D_{(x, \cR)}F$). All points of the form $(e_0, \cR)$ are mapped by $F$ to the identity matrix $\mathbbm{1}$ which belongs to the open stratum $\cZ^{(0)}$, on which transversality is automatic (because this stratum has full dimension). This concludes the proof.\end{proof}

The following result on the number of singular points on a generic symmetroid surface is well-known among the experts.
\begin{prop}\label{cor:bound}
For generic $\cR\in \mathrm{Sym}(n, \R)^{3}$ the number of singular points $\rho_n$ on the symmetroid $\Sigma_{3,n}$ and hence the number of singular points $\sigma_n$ on $\partial\sS_{3, n}$ is finite and satisfies
$$\sigma_n \leq \rho_n\leq \frac{n(n+1)(n-1)}{3}.$$
Moreover, for any $n\geq 1$ there exists a symmetroid $\Sigma_{3,n}$ with $\rho_n=\frac{n(n+1)(n-1)}{3}$ singular points on it.
\end{prop}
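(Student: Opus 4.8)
The plan is to identify the singular points of $\Sigma_{3,n}$ with the corank-$\geq 2$ locus and then bound its cardinality by a degree count. By \cref{prop:strata} applied with $\ell=3$, for generic $\cR$ the stratum $\Sigma_{3,n}^{(k)}$ is either empty or of codimension $\binom{k+1}{2}$ in $S^3$; so $\Sigma_{3,n}^{(1)}$ is a smooth surface (codimension $1$), $\Sigma_{3,n}^{(2)}$ is a finite set of points (codimension $3$), and $\Sigma_{3,n}^{(k)}=\emptyset$ for $k\geq 3$ (since $\binom{k+1}{2}>3$). The first step is to recall that a point $x\in\Sigma_{3,n}$ is a singular point of the symmetroid hypersurface precisely when $A(x)$ has corank $\geq 2$: this is the standard fact that the determinantal hypersurface $\{\det M=0\}$ in the space of symmetric matrices is smooth exactly at corank-one points, where the gradient $\nabla\det M=\operatorname{adj}(M)\neq 0$, together with the transversality of $F$ established in \cref{prop:strata} to transfer this to the sliced family. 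Hence $\mathrm{Sing}(\Sigma_{3,n})=\Sigma_{3,n}^{(2)}$ is a finite set, and likewise $\mathrm{Sing}(\partial\sS_{3,n})\subset\mathrm{Sing}(\Sigma_{3,n})$ gives $\sigma_n\leq\rho_n$ with both finite.

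For the upper bound $\rho_n\leq\frac{n(n+1)(n-1)}{3}$, the approach is to view the problem on the real projective space $\HP^3$ (or its affine charts) and count corank-$\geq 2$ points of the $3$-parameter pencil of symmetric matrices $A(x)$ via intersection theory on the variety of symmetric matrices of corank $\geq 2$. Classically, the locus $\Delta_{n,2}\subset\HP(\mathrm{Sym}(n,\R))$ of symmetric matrices of corank $\geq 2$ has codimension $3$ and degree $\binom{n+1}{3}=\frac{n(n+1)(n-1)}{6}$ (this is a theorem of Harris–Tu / Józefiak–Lascoux–Pragacz on degrees of symmetric determinantal loci). A generic $3$-dimensional projective-linear slice $\HP^3\cap \Delta_{n,2}$ therefore consists of $\frac{n(n+1)(n-1)}{6}$ complex points; passing from projective space to the sphere $S^3$ (the double cover of $\HP^3$) introduces the factor of $2$, giving at most $2\cdot\frac{n(n+1)(n-1)}{6}=\frac{n(n+1)(n-1)}{3}$ real points. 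Alternatively — and this is cleaner if one wants to avoid invoking the degree formula as a black box — one can use the resolution of $\Delta_{n,2}$ by the incidence variety $\{(M,[v])\,:\,Mv=0\}\subset\mathrm{Sym}(n,\R)\times\HP^{n-1}$, which is a projective bundle over $\HP^{n-1}$, and compute the relevant Chern class to obtain the degree; the count of solutions of the system ``$A(x)v=0$, $x\in S^3$, $v\in S^{n-1}$'' is then a Bézout-type computation.

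The realizability statement — that for every $n$ there is a symmetroid attaining $\rho_n=\frac{n(n+1)(n-1)}{3}$ — is handled by exhibiting an explicit pencil whose corank-$2$ locus is reduced and consists only of real points, equivalently by a genericity/transversality argument over $\R$ showing the complex count can be achieved with all solutions real. The standard device is to take $R_1,R_2,R_3$ so that $A(x)=x_0\mathbbm{1}+x_1R_1+x_2R_2+x_3R_3$ restricts on a suitable real $3$-plane to a family of diagonal matrices, or more flexibly to perturb a configuration realizing all intersection points as transverse real points and invoke that transversality is an open condition; one must check that the slice remains transverse to $\Delta_{n,2}$ so that no points are lost or become complex. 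The main obstacle is precisely this last point: the degree count only gives an upper bound a priori over $\R$, so the real realizability requires producing (or perturbing to) a configuration where all $\frac{n(n+1)(n-1)}{6}$ projective intersection points are real, reduced, and each lifts to two real points on $S^3$ — establishing that such a configuration exists for all $n$, rather than just small $n$, is where the real work lies.
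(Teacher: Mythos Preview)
Your treatment of the finiteness and the upper bound is essentially the same as the paper's: Proposition~\ref{prop:strata} with $k=2$ gives that $\mathrm{Sing}(\Sigma_{3,n})=\Sigma_{3,n}^{(2)}$ is generically finite, and the bound $\rho_n\leq\tfrac{n(n+1)(n-1)}{3}$ comes from passing to the complex projective symmetroid and invoking the Harris--Tu degree $\deg(\cZ^{(2)}_{\mathbb{C}})=\tbinom{n+1}{3}$, with the factor $2$ from $S^3\to\mathbb{RP}^3$.

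The genuine gap is in the realizability claim. You correctly diagnose the obstacle --- one must exhibit, for every $n$, a real pencil whose corank-$2$ locus is reduced and entirely real --- but you do not actually produce one; the suggestions (restrict to diagonal families, perturb a good configuration) remain programmatic. The paper closes this with an explicit construction. Take $n+1$ generic linear forms $L_1,\ldots,L_{n+1}$ in the variables $x_0,\ldots,x_3$ and form the pencil
\[
A(x)=\mathrm{diag}\bigl(L_1(x),\ldots,L_n(x)\bigr)+L_{n+1}(x)\,ee^{T},\qquad e=(1,\ldots,1).
\]
By the determinant formula for a rank-one update,
\[
\det A(x)=\sum_{i=1}^{n+1}\prod_{j\neq i}L_j(x),
\]
so both this polynomial and its gradient vanish precisely at the triple intersections $\{L_i=L_j=L_k=0\}$. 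For generic linear forms these are $\tbinom{n+1}{3}$ distinct real points in $\mathbb{RP}^3$, hence $2\tbinom{n+1}{3}=\tfrac{n(n+1)(n-1)}{3}$ points on $S^3$, all singular on $\Sigma_{3,n}$. This is the missing ingredient your proposal flags but does not supply.
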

\begin{proof}
The fact that $\sigma_n\leq \rho_n$ are generically finite follows from \cref{prop:strata} with $k=2$, as remarked before. Observe that $\rho_n$ is bounded by twice (since $\Sigma_{3,n}$ is a subset of $S^3$) the number $\#\text{\normalfont{Sing}} (\Sigma_{3,n}^\mathbb{C})$ of singular points on the complex symmetroid projective surface $$\Sigma_{3,n}^\mathbb{C} = \{x\in \mathbb{C}\mathrm{P}^3 | \det(x_0\mathbbm{1}+ x_1 R_1 + x_2R_2+x_3R_3))=0\}$$
Since $\text{\normalfont{Sing}}(\Sigma_{3,n}^\mathbb{C})$ is obtained as a linear section of the set $\cZ^{(2)}_{\mathbb{C}}$ of $n\times n$ \textit{complex} symmetric matrices of corank two (using similar transversality arguments as in \cref{prop:strata}) we have that generically $\#\text{\normalfont{Sing}} (\Sigma_{3,n}^\mathbb{C})=\deg(\cZ^{(2)}_{\mathbb{C}})$. The latter is equal to $ \tfrac{n(n+1)(n-1)}{6}$; see \cite{HarrisTu}.

Now comes the proof of the second claim, we are thankful to Bernd Sturmfels and Simone Naldi for helping us with this. Consider a generic collection of $n+1$ linear forms $L_1,\dots,L_{n+1}$ in $4$ variables. Then, consider the spectrahedron
$$\sS_{3, n} = \{x\in S^3 : \mathrm{diag}(L_1(x), \ldots,L_{n}(x)) + L_{n+1}(x) \,ee^T \in \cP_n\},$$
where $e=(1,\ldots,1)$. The corresponding symmetroid $\Sigma_{3,n}$ is defined by the equation
$$\det(\mathrm{diag}(L_1(x), \ldots,L_{n}(x)) + L_{n+1}(x) \,ee^T) = \sum_{i=1}^{n+1} \prod_{j\neq i} L_i(x) = 0.$$
(the equality can, for instance, be shown by using the determinant rule for rank-one updates). Therefore the singular points on $\Sigma_{3,n}$ are given by the equations
$$\sum_{i=1}^{n+1} \prod_{j\neq i} L_i(x) = 0 \quad \text{ and }\quad  \sum_{i=1}^{n+1} \sum_{j\neq i} \prod_{k \not \in  \{i,j\}} L_k(x) = 0.$$
The triple intersections of the hyperplanes $L_1,\dots,L_{n+1}$ produce $2{n+1\choose 3}=\frac{(n+1)n(n-1)}{3}$ solutions in $S^3$ for those equations. They are $\frac{(n+1)n(n-1)}{3}$ singular points on $\Sigma_{3,n}$.
\end{proof}
We now prove Theorem \ref{main3}.
\subsection{Proof of Theorem \ref{main3}} We prove Theorem \ref{main3} (2). The proof of \cref{main3} (1) is completely analogous.

By Proposition \ref{prop:strata}, for a generic choice of $\cR = (R_1,R_2,R_3)\in \mathrm{Sym}(n,\HR)^3$ the singular locus of $\Sigma_{3,n}$ is given by the matrices of corank $2$ in $\Sigma_{3,n}$:
\begin{align}\label{eq1:duality}
	 \Sigma_{3,n}^{(2)} = \txt{Sing}(\Sigma_{3,n}),
\end{align}
and the set $\Sigma_{3,n}^{(2)}$ is finite.

We first show that the number $\#(\mathrm{Sing}(\Sigma_{3,n})$ of singular points on the symmetroid surface defined by $\cR=(R_1,R_2,R_3)$ coincides with the number $\#(V\cap \Delta)$ of matrices in the linear space $V=\mathrm{span}\{R_1,R_2,R_3\}$ meeting $\Delta$. For this observe that
 \begin{align}\label{eq2:duality}
\lambda_i(x_0\mathbbm{1}+R(x)) = x_0+\lambda_i(R(x)),\ i=1,\dots,n,
\end{align}
where we denote $R(x)=x_1R_1+x_2R_2+x_3R_3$. Thus, if two eigenvalues of $R(x)$, say $\lambda_i(R(x))$ and $\lambda_j(R(x))$, coincide, then, due to  \eqref{eq2:duality} we have
\begin{align*}
\lambda_i(x_0\mathbbm{1}+R(x)) = \lambda_j(x_0\mathbbm{1}+R(x)) = 0 \text{ for } x_0 = -\lambda_i(R(x)).
\end{align*}
Therefore, by \cref{eq2:duality},
\begin{align*}
\frac{(-\lambda_i(R(x)),x_1,x_2,x_3)}{\sqrt{\lambda_i(R(x))^2+x_1^2+x_2^2+x_3^2}} \in \sS_{3,n}^{(2)}=\txt{Sing}(\partial\sS_{3,n})
\end{align*}
is a singular point of $\partial\sS_{3,n}$. Vice versa, if $(x_0,x_1,x_2,x_3)\in \txt{Sing}(\sS_{3,n})$ we have, by \cref{eq2:duality}, $\lambda_1(R(x))=\lambda_2(R(x))$ and $(x_1,x_2,x_3)\neq 0$. Hence, $R(x)/\Vert R(x)\Vert \in \Delta$. Moreover, one can easily see that the described identification is one-to-one.

When $Q_1, Q_2, Q_3\in \mathrm{Sym}(n,\HR)$ are $\mathrm{GOE}(n)$-matrices and $R_i = Q_i/\sqrt{6n}, i=1,2,3$ the space $V=\mathrm{span}\{R_1,R_2,R_3\}$ is uniformly distributed in the Grassmanian $\mathbb{G}_3$. Applying the integral geometry formula \cite[p. 17]{Howard} to $\Delta\subset S^{N-1}$ and the random space $V\subset \mathrm{Sym}(n,\HR)$ we write
\
\begin{align}\label{eq: integral geometry}
  \mean \#(V\cap \Delta) = 2\frac{|\Delta|}{|S^{N-1}|} = 2|\Delta|_{\mathrm{rel}}
\end{align}
From the above it follows that with probability one $\#(V\cap \Delta) = \#(\mathrm{Sing}(\Sigma_{3,n}))= \rho_n$. Thus $\mean \#(V\cap \Delta) = \mean \rho_n$ which combined with \eqref{eq: integral geometry} completes the proof of Theorem \ref{main3} (1).\qed

From this proof we get the following deterministic corollary.
\begin{cor}\label{rho_deterministic}
If $n\equiv 2 \bmod 4$, then $\rho_n\geq2$.
\end{cor}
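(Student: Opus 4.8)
The plan is to build directly on the proof of \cref{main3}, which shows that for generic $\cR=(R_1,R_2,R_3)$ one has $\rho_n=\#(V\cap\Delta)$, where $V=\mathrm{span}\{R_1,R_2,R_3\}$ is a generic $3$-dimensional linear subspace of $\mathrm{Sym}(n,\R)\cong\R^{N}$ and $\Delta\subset S^{N-1}$ is the set of unit-norm symmetric matrices with a repeated eigenvalue. Since we only need a lower bound, it suffices to combine two facts: $\#(V\cap\Delta)$ is always even, and $V\cap\Delta\neq\emptyset$ when $n\equiv2\bmod4$. Evenness is immediate: $Q\mapsto-Q$ is a fixed-point-free involution of $S^{N-1}$ preserving both $V$ and $\Delta$, hence acting freely on the finite set $V\cap\Delta$. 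Passing to the quotient this also gives $\#(V\cap\Delta)=2\,\#\!\big(\mathbb{P}(V)\cap\Delta_{\mathbb{P}}\big)$, where $\Delta_{\mathbb{P}}\subset\R\mathrm{P}^{N-1}=\mathbb{P}(\mathrm{Sym}(n,\R))$ is the projectivized discriminant and $\mathbb{P}(V)\cong\R\mathrm{P}^2$ is a generic plane. So the whole problem reduces to showing $\mathbb{P}(V)\cap\Delta_{\mathbb{P}}\neq\emptyset$ for $n\equiv2\bmod4$.

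I would prove this by a mod-$2$ degree computation. The singular locus of $\Delta_{\mathbb{P}}$ (matrices with a triple eigenvalue, codimension $5$, or with two distinct repeated pairs, codimension $4$) has codimension $\geq 4$ in $\R\mathrm{P}^{N-1}$, and away from it $\Delta_{\mathbb{P}}$ is a smooth submanifold of codimension two; hence $\Delta_{\mathbb{P}}$ carries a mod-$2$ fundamental class, with Poincar\'e dual a class $[\Delta_{\mathbb{P}}]\in H^{2}(\R\mathrm{P}^{N-1};\mathbb{Z}/2)\cong\mathbb{Z}/2$, and a generic $\R\mathrm{P}^2$ meets $\Delta_{\mathbb{P}}$ transversally in its smooth part with $\#\!\big(\mathbb{P}(V)\cap\Delta_{\mathbb{P}}\big)\equiv\langle[\Delta_{\mathbb{P}}],[\R\mathrm{P}^2]\rangle\bmod2$. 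It is essential to work projectively: the analogous class in $H^{2}(S^{N-1};\mathbb{Z}/2)$ vanishes for $N>3$, which is exactly why the mod-$2$ count for the \emph{special} $2$-sphere $V\cap S^{N-1}$ need not vanish even though the one for a generic $2$-sphere does. Thus it remains to show $[\Delta_{\mathbb{P}}]\neq0$ when $n\equiv2\bmod4$.

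To evaluate $[\Delta_{\mathbb{P}}]$ I would resolve $\Delta_{\mathbb{P}}$. Let $G=\mathrm{Gr}(2,n)$ be the Grassmannian of $2$-planes in $\R^n$, with tautological subbundle $\tau$ and orthogonal complement $\tau^{\perp}$, and set $\widetilde\Delta=\{(W,[Q])\in G\times\R\mathrm{P}^{N-1}:\ W\text{ is }Q\text{-invariant and }Q|_{W}\text{ is scalar}\}$. Projection to $G$ identifies $\widetilde\Delta$ with $\mathbb{P}(E)$ for the subbundle $E\subset G\times\mathrm{Sym}(n,\R)$ with fibre $\R\mathbbm{1}_{W}\oplus\mathrm{Sym}(W^{\perp})$ over $W$, and $\widetilde\Delta\to\Delta_{\mathbb{P}}$ is proper and bijective over the smooth locus, so $[\Delta_{\mathbb{P}}]$ is the mod-$2$ pushforward of $[\widetilde\Delta]$. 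Inside $G\times\R\mathrm{P}^{N-1}$ the subvariety $\mathbb{P}(E)$ is the regular zero locus of a section of $\cO(1)\otimes F$ (with $\cO(1)$ pulled back from $\R\mathrm{P}^{N-1}$), where $F=(G\times\mathrm{Sym}(n,\R))/E\cong\mathrm{Sym}_{0}(\tau)\oplus\mathrm{Hom}(\tau,\tau^{\perp})$, $\mathrm{Sym}_0(\tau)$ being the rank-two bundle of trace-free self-adjoint endomorphisms of $\tau$ and $\mathrm{Hom}(\tau,\tau^{\perp})\cong TG$. A short computation gives $w(\mathrm{Sym}_{0}(\tau))=1+w_{1}(\tau)$; integrating $w_{2n-2}(\cO(1)\otimes F)$ over the fibre $G$ (of dimension $\dim G=2n-4$) then yields
$$[\Delta_{\mathbb{P}}]=\big\langle w_{\dim G}(F),[G]\big\rangle\,t^{2},\qquad w_{\dim G}(F)=w_{\dim G}(TG)+w_{1}(\tau)\,w_{\dim G-1}(TG),$$
where $t\in H^{1}(\R\mathrm{P}^{N-1};\mathbb{Z}/2)$ is the generator and $\langle w_{\dim G}(TG),[G]\rangle=\chi(G)\bmod2$.

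The last, and hardest, step is then a purely combinatorial computation of a Stiefel--Whitney number on $\mathrm{Gr}(2,n)$: one must show that $\chi(\mathrm{Gr}(2,n))+\langle w_{1}(\tau)\,w_{\dim G-1}(TG),[G]\rangle$ is odd precisely when $n\equiv2\bmod4$. For this I would use $\chi(\mathrm{Gr}(2,n))=\lfloor n/2\rfloor$ together with the mod-$2$ cohomology ring of $\mathrm{Gr}(2,n)$ (Schubert calculus, and the relation $w(\tau)\,w(\tau^{\perp})=1$) to pin down the correction term $\langle w_{1}(\tau)\,w_{\dim G-1}(TG),[G]\rangle$; alternatively, one may recognize this parity statement as being of the kind established by Lax and by Friedland--Robbin--Sylvester, and quote it from \cite{Lax82,FRS84}. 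Granting this, $\#(\mathbb{P}(V)\cap\Delta_{\mathbb{P}})$ is odd, hence nonzero, for $n\equiv2\bmod4$, so $\rho_{n}=2\,\#(\mathbb{P}(V)\cap\Delta_{\mathbb{P}})\geq2$ for every generic $\cR$ (recall that $\rho_n$ is finite only generically, by \cref{prop:strata}). I expect this final characteristic-number computation — or the bookkeeping needed to match it against \cite{Lax82,FRS84} — to be the main obstacle; everything before it is formal.
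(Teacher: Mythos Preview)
Your proposal is correct in outline but takes a far longer route than the paper. The paper's proof is three sentences: it recalls from the proof of \cref{main3} that $\rho_n=\#(V\cap\Delta)$ for generic $V=\mathrm{span}\{R_1,R_2,R_3\}$, then directly quotes Lax's theorem \cite{Lax82} that for $n\equiv 2\bmod 4$ every $3$-dimensional linear subspace of $\mathrm{Sym}(n,\R)$ contains a nonzero matrix with a repeated eigenvalue, and concludes $\rho_n\geq 2$ by antipodal symmetry. Your entire middle section --- the resolution $\widetilde\Delta\to\Delta_{\mathbb P}$, the identification of $F\cong\mathrm{Sym}_0(\tau)\oplus TG$, the fibre integration, and the Stiefel--Whitney number on $\mathrm{Gr}(2,n)$ --- is essentially a sketch of a \emph{reproof} of Lax's theorem; indeed you yourself note at the end that ``one may recognize this parity statement as being of the kind established by Lax and by Friedland--Robbin--Sylvester, and quote it from \cite{Lax82,FRS84}''. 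That is exactly what the paper does, bypassing everything between your first paragraph and your last sentence. Your approach has the virtue of being self-contained and of making visible \emph{why} the congruence $n\equiv 2\bmod 4$ appears (via $\chi(\mathrm{Gr}(2,n))=\lfloor n/2\rfloor$ and the correction term), but for a corollary of this kind the paper's direct citation is the appropriate level of detail.
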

\begin{proof}
As was shown in the proof, $\rho_n = \#(V\cap \Delta)$, where $V$ is the 3-dimensional linear space $V=\mathrm{span}\{R_1,R_2,R_3\}$. Lax \cite{Lax82} proved that every 3-dimensional linear subspace of $\mathrm{Sym}(n,\mathbb{R})$ contains at least one point in $\Delta$. This implies $\rho_n\geq2$ (because, points on $S^{N-1}$ come in pairs of antipodal points). See also \cite{FRS84} for other values of $n$ with $\rho_n=\#(V\cap \Delta)\geq 2$.
\end{proof}
\section{Quartic Spectrahedra}\label{sec:quartic}
In this section we prove \cref{prop:nodes}: the identity $ \mean\rho_4=12$ follows immediately from \cref{cor_rho} for $n=4$.
For the other identity we apply \cref{sigma_n_formula} and the formula \cite[(3.2)]{discr_volume}:
\begin{align*}\mean \sigma_4
&=\frac{2^4}{\sqrt{\pi}}\frac{1}{4!}{4\choose 2} \, \int_{u\in\HR} \mean_{Q\sim\mathrm{GOE}(2)} \big[\mathbf{1}_{\{Q-u\mathbbm{1}\in \cP_n\}}\,\det(Q-u\mathbbm{1})^2 \big]e^{-u^2}\,\d u\\
&=\frac{4}{\sqrt{\pi}} \int_{\R}\left(\frac{1}{Z_2}\int_{\R^2}(\lambda_1-u)^2(\lambda_2-u)^2|\lambda_1-\lambda_2|e^{-\frac{\lambda_1^2+\lambda_2^2}{2}}\mathbf{1}_{\{\lambda_1-u>0, \lambda_2-u>0\}}d\lambda_1d\lambda_2\right)e^{-u^2}\,\d u\\
&=(\star),
\end{align*}
where $Z_2= 4\sqrt{\pi}$ is the normalization constant for the density of eigenvalues of a $\mathrm{GOE}(2)$-matrix (see \cite[(3.1)]{discr_volume}).

We now apply the change of variables $\alpha_1=\lambda_1-u, \alpha_2=\lambda_2-u$ in the innermost integral, obtaining:
\begin{align*}(\star)&=\frac{4}{\sqrt{\pi}} \int_{\R}\left(\frac{1}{Z_2}\int_{\R_+^2}(\alpha_1\alpha_2)^2|\alpha_1-\alpha_2|e^{-\frac{\alpha_1^2+\alpha_2^2}{2}}e^{-u^2 -u(\alpha_1+\alpha_2)}\,\d\alpha_1\,\d\alpha_2\right)e^{-u^2}\,\d u\\
&=\frac{4}{\sqrt{\pi}}\frac{1}{Z_2}\int_{\R_+^2} (\alpha_1\alpha_2)^2|\alpha_1-\alpha_2|e^{-\frac{\alpha_1^2+\alpha_2^2}{2}}\left(\int_\R e^{-2u^2 -u(\alpha_1+\alpha_2)}du\right)\,\d\alpha_1\,\d\alpha_2\\
&= \frac{1}{\pi}\int_{\R_+^2} (\alpha_1\alpha_2)^2|\alpha_1-\alpha_2|e^{-\frac{\alpha_1^2+\alpha_2^2}{2}}\left(\sqrt{\frac{\pi}{2}} e^{\frac{(\alpha_1+\alpha_2)^2}{8}}\right)\,\d\alpha_1\,\d\alpha_2\\
&=\frac{1}{\sqrt{2\pi}}\int_{\R_+^2} (\alpha_1\alpha_2)^2|\alpha_1-\alpha_2|e^{-\frac{\alpha_1^2+\alpha_2^2}{2}+\frac{(\alpha_1+\alpha_2)^2}{8}}\,\d\alpha_1\,\d\alpha_2\\
&=\frac{2}{\sqrt{2\pi}}\int_{\R_+^2\cap\{\alpha_1< \alpha_2\}} (\alpha_1\alpha_2)^2|\alpha_1-\alpha_2|e^{-\frac{\alpha_1^2+\alpha_2^2}{2}+\frac{(\alpha_1+\alpha_2)^2}{8}}\,\d\alpha_1\,\d\alpha_2.
\end{align*}
In the last equality we have used the fact that the integrand is invariant under the symmetry $(\alpha_1, \alpha_2)\mapsto (\alpha_2, \alpha_1).$
Consider now the map $F:\R_+^2\cap\{\alpha_1< \alpha_2\}\to \R[x]\simeq \R^2$ given by
$$F(\alpha_1, \alpha_2)=(x-\alpha_1)(x-\alpha_2)=x^2-(\alpha_1+\alpha_2)x+\alpha_1\alpha_2.$$
Essentially, $F$ maps the pair $(\alpha_1, \alpha_2)$ to a monic polynomial of degree two whose ordered roots are $(\alpha_1, \alpha_2).$ Observe that $F$ is injective on the region $\R_+^2\cap\{\alpha_1< \alpha_2\}$ with never-vanishing Jacobian $|JF(\alpha_1, \alpha_2)|=|\alpha_1-\alpha_2|$. What is the image of $F$ in the space of polynomials $\R[x]$? Denoting by $a_1, a_2$ the coefficients of a monic polynomial $p(x)=x^2-a_1x+a_2\in \R[x]$, we see first that the conditions $\alpha_1, \alpha_2>0$ imply $a_1, a_2>0$. Moreover the polynomial $p(x)=F(\alpha_1, \alpha_2)$ has by construction real roots, hence its discriminant $a_1^2-4a_2$ must be positive.

Viceversa, given $(a_1, a_2)$ such that $a_1,a_2>0$ and $a_1^2-4a_2>0$, the roots of $p(x)=x^2-a_1x+a_2$ are real and positive. Hence,
$ F(\R_+^2\cap\{\alpha_1< \alpha_2\})=\{(a_1, a_2)\in \R^2\,|\, a_1,a_2>0, a_1^2-4a_2>0\}.$
Thus we can write the above integral as
\begin{align*}(\star)&=\frac{2}{\sqrt{2\pi}}\int_0^{\infty}\int_{0}^{\frac{a_1^2}{4}} a_2^2e^{-\frac{a_1^2-2a_2}{2}+\frac{a_1^2}{8}}\,\d a_2\d a_1
=\frac{2}{\sqrt{2\pi}}\int_0^{\infty} e^{-\frac{3 a_1^2}{8}}\left(\int_{0}^{\frac{a_1^2}{4}} a_2^2 e^{a_2}da_2\right)\,\d a_1
\end{align*}
and performing elementary integration we obtain $(\star)= \mean \sigma_4=6-\frac{4}{\sqrt{3}}$.

\subsection*{Acknowledgements}The authors wish to thank P. B\"urgisser, S. Naldi, B. Sturmfels and three anonymous referees for helpful suggestions and remarks on the paper.

\bibliographystyle{plain}
\bibliography{literature}

\end{document}